\title{Algebraic Mapping Class Group Rigidity}
\author{Seong Youn Kim}
\newtheorem{theorem}{Theorem}
\numberwithin{theorem}{section}
\newtheorem{proposition}[theorem]{Proposition}
\newtheorem{corollary}[theorem]{Corollary}
\newtheorem*{metaconjecture}{Metaconjecture}
\newtheorem*{maintheorem}{Main Theorem}
\newtheorem*{ntheorem}{Theorem}
\theoremstyle{definition}
\newtheorem{definition}[theorem]{Definition}
\theoremstyle{remark}
\newtheorem{example}[theorem]{Example}
\newtheorem*{remark}{Remark}
\newcommand{\A}{{\mathbb{A}}}
\newcommand{\C}{{\mathbb{C}}}
\renewcommand{\H}{{\mathbb{H}}}
\newcommand{\R}{{\mathbb{R}}}
\newcommand{\Z}{{\mathbb{Z}}}
\newcommand{\Ccal}{{\mathcal{C}}}
\newcommand{\Gcal}{{\mathcal{G}}}
\newcommand{\Ocal}{{\mathcal{O}}}
\newcommand{\Tcal}{{\mathcal{T}}}
\newcommand{\Vcal}{{\mathcal{V}}}
\renewcommand{\char}{\textup{char}}
\renewcommand{\c}{\textup{c}}
\newcommand{\id}{{\textup{id}}}
\DeclareMathOperator{\Aut}{Aut}
\DeclareMathOperator{\Diff}{Diff}
\DeclareMathOperator{\GL}{GL}
\DeclareMathOperator{\Hom}{Hom}
\DeclareMathOperator{\sheafhom}{\mathcal{H}\kern -.5pt \emph{om}}
\DeclareMathOperator{\img}{im}
\DeclareMathOperator{\Mod}{Mod}
\DeclareMathOperator{\Out}{\textup{Out}}
\DeclareMathOperator{\PGL}{PGL}
\DeclareMathOperator{\PSL}{PSL}
\DeclareMathOperator{\rk}{rk}
\DeclareMathOperator{\Rep}{Rep}
\DeclareMathOperator{\SL}{SL}
\DeclareMathOperator{\SO}{SO}
\DeclareMathOperator{\Spec}{Spec}
\DeclareMathOperator{\Supp}{Supp}
\DeclareMathOperator{\tr}{tr}
\begin{document}

\maketitle

\begin{abstract}
Let $g, n \geq 0$ and $\Sigma = \Sigma_{g, n}$ be a connected oriented surface of genus $g$ with $n$ punctures. The $\SL_2$-character variety of $\Sigma$ has a rigid relative automorphism group as a finite extension of the mapping class group. The exceptional isomorphism between the $\SL(2, \C)$-character variety and the moduli of points on complex $3$-sphere provides a new description of the mapping class group of certain $\Sigma$. 
\end{abstract}

\section{Introduction}

In \cite{ivanov2006fifteen}, Ivanov formulated the metaconjecture for the mapping class group of surfaces.

\begin{metaconjecture}
    Every object naturally associated with a surface $\Sigma$ and having a sufficiently rich structure has $\Mod(\Sigma)$, the mapping class group of $\Sigma$, as its automorphism group. Moreover, this can be proved by a reduction to the theorem about the automorphism of the curve complex.
\end{metaconjecture}

The \textit{mapping class group rigidity} problem concerns identifying objects for which the conjecture holds. The conjecture for the curve complex itself was proven by Ivanov for closed surfaces, and later generalized by Korkmaz and Luo \cite{ivanov1997automorphisms, korkmaz1999automorphisms, luo1999automorphisms}.

The space of \textit{measured laminations}, denoted by $\mathcal{ML}(\Sigma)$ endowed with the intersection form, is also a natural object for the metaconjecture. Luo \cite{luo1999automorphisms} first proved that the automorphism of $\mathcal{ML}(\Sigma)$ induces automorphisms of the curve complex as desired, which is also observed by \cite{ohshika2018homeomorphismes, jyothis2023towards}.

Julien March\'e and Christopher-Lloyd Simon proved the conjecture for $\SL(2, \C)$-character variety of closed surfaces \cite{marche2021automorphisms}. We refer to this phenomenon as \textit{algebraic mapping class group rigidity}. This paper proves the relative version of the result.

To formulate the problem appropriately, let us observe the mapping class group of surfaces with punctures or boundaries naturally acts on the \textit{relative character varieties} regardless of the monodromy along the punctures or boundaries. However, one cannot expect the automorphism groups of all fibers to be isomorphic since the fibers themselves are not mutually isomorphic. Serge Cantat observed this for $\SL(2, \C)$-character variety for $4$-punctured sphere \cite{cantat2007holomorphic}.

On the other hand, as Cantat's result \cite{cantat2007holomorphic} shows, the automorphism group of the generic fiber consists only of mapping class group symmetries. Let us define the group as \textit{relative automorphism group} of the $\SL_2$-character variety.

\begin{maintheorem}
    Let $(g, n) \neq (1, 2)$ and $\Sigma = \Sigma_{g, n}$ be a surface of genus $g$ having $n$ punctures. The relative automorphism group of the $\SL_2$-character variety of $\Sigma$ is generated by the image of the mapping class group and the central representations fixing the boundary monodromies.
\end{maintheorem}

This recovers the result of March\'e-Simon \cite{marche2021automorphisms} and is also compatible with the result of Cantat \cite{cantat2007holomorphic} for four-punctured sphere and once-punctured torus. Also we will see the proof works regardless of the choice of the base domain if it is of characteristic zero.

Let us sketch the proof of the main theorem of \cite{marche2021automorphisms}, focusing on how the problem is reduced to the curve complex. 

Let $X = \Spec A$ be an affine integral scheme. Then $\Aut(X)$ fixes the generic point of $X$ and the \textit{Berkovich analytic boundary} of $X$ at the generic point, the rank $1$ valuations on $K(A)$ that do not contain $A$ in its valuation ring. 

March\'e-Simon constructed an embedding of $\mathcal{ML}(\Sigma)$ into the Berkovich analytic boundary of the $\SL(2, \C)$-character variety of closed surfaces. Using the \textit{tropical Fuchsian domination}, which will be introduced, we conclude that $\mathcal{ML}(\Sigma)$ is invariant under the action of the automorphism group.

\subsection*{Fuchsian domination}

Let $\rho : \pi_1 \Sigma \rightarrow \SL(2, \C)$ be a representation. For any $\gamma \in \pi_1 \Sigma$, consider the \textit{length spectrum functional}
\[
\ell_\rho(\gamma) = \inf_{x \in \H^3} d_{\H^3}(x, \rho(\gamma) x).
\]
Gu\'eritaud-Kassel-Wolff showed that a non-Fuchsian closed surface group representation into $\SL(2, \R)$ is dominated by a Fuchsian one and Deroin-Tholozan proved a similar result for representations into $\SL(2, \C)$ \cite{gueritaud2015compact, deroin2016dominating}.

\begin{ntheorem}[Gupta-Su \cite{gupta2023dominating}] Let $\Sigma = \Sigma_{g, n}$ and $\rho : \pi_1 \Sigma \rightarrow \SL(2, \C)$. There exists a Fuchsian representation $\rho_0$ dominating $\rho$ in the sense that for any $\gamma \in \pi_1 \Sigma$,
\[
\ell_\rho(\gamma) \leq \ell_{\rho_0}(\gamma)
\]
and the equality holds for any peripheral curve.
    
\end{ntheorem}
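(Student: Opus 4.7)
The plan is to adapt the harmonic map argument of Deroin-Tholozan from the closed case to the punctured setting, so that both the global domination and the peripheral equality are read off from a single equivariant harmonic map. After the usual reductions (assume $\rho$ is non-elementary and not already conjugate into $\SL(2, \R)$, else take $\rho_0 = \rho$), the target Fuchsian representation $\rho_0$ should correspond to a hyperbolic structure on $\Sigma$ with geodesic boundary or cusps whose boundary lengths equal $\ell_\rho(c_1), \ldots, \ell_\rho(c_n)$. Any such collection of non-negative real numbers is realized by some point in the Teichm\"uller space of such structures on $\Sigma_{g,n}$, so no candidate for $\rho_0$ is obstructed a priori.

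First I would construct an equivariant harmonic map $h : \tilde\Sigma \to \H^3$ with asymptotic behavior matching the peripheral monodromies: near each puncture $p_i$, the map $h$ should enter an invariant horoball if $\rho(c_i)$ is parabolic, remain at bounded distance from the invariant axis if $\rho(c_i)$ is loxodromic, and be asymptotic to the fixed point if $\rho(c_i)$ is elliptic. Standard Corlette-Donaldson theory does not apply directly since $\Sigma$ is noncompact, so I would argue via an exhaustion by compact subsurfaces $\Sigma_k \subset \Sigma$, solving the equivariant Dirichlet problem on each $\Sigma_k$ with boundary values pulled back from the model maps near the punctures. Uniform energy bounds, obtained from the control of $\rho$ on the peripheral loops, let one extract a subsequential limit that is the desired $h$.

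Given $h$, its Hopf differential $\phi = (h^* g_{\H^3})^{2,0}$ is a holomorphic quadratic differential on $\Sigma$ with at worst double poles at the punctures, the residue at $p_i$ being determined by $\ell_\rho(c_i)$. Wolf's parametrization of the Teichm\"uller space of $\Sigma_{g,n}$ by holomorphic quadratic differentials produces a hyperbolic metric whose identity self-map has Hopf differential exactly $\phi$, and the associated Fuchsian holonomy is the desired $\rho_0$. A pointwise Bochner-type tension-field inequality, applied along a closed geodesic representative of any $\gamma \in \pi_1 \Sigma$, then yields $\ell_\rho(\gamma) \leq \ell_{\rho_0}(\gamma)$ in the style of Deroin-Tholozan.

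The equality on peripheral curves should follow from the matching of the residues of $\phi$, which simultaneously encode the peripheral translation length of $\rho$ and the boundary geodesic length of $\rho_0$. The main obstacle I anticipate is precisely this asymptotic matching: one must verify that the Dirichlet-problem construction produces an $h$ whose Hopf-differential residue at $p_i$ is exactly the value predicted by $\ell_\rho(c_i)$, rather than differing by a contribution from a non-canonical choice of model map. I expect the bulk of the technical work to lie here, to be resolved by tightening the model maps near each puncture so that their contribution to the energy is pinned down by the peripheral data alone.
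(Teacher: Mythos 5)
The paper does not prove this statement; it is quoted wholesale from Gupta--Su as an input, so there is no in-paper argument to compare against. Your sketch is, however, in the right neighborhood: the Gupta--Su result, like the Deroin--Tholozan closed-surface theorem it extends, is proved through an equivariant harmonic map to $\H^3$, the analysis of its Hopf differential, and a comparison with a hyperbolic metric. So the high-level architecture you propose is the correct one. But two of your steps are genuinely wrong or gappy as written, and both occur at load-bearing points.

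First, the reduction ``assume $\rho$ is \ldots not already conjugate into $\SL(2,\R)$, else take $\rho_0 = \rho$'' is incorrect. An $\SL(2,\R)$-valued representation of a punctured surface group is almost never Fuchsian (discrete and faithful), and dominating such a representation by a Fuchsian one with matching boundary lengths is itself a nontrivial theorem --- it is precisely the Gu\'eritaud--Kassel--Wolff result that this paper also cites. You must invoke that result in the real case, not declare it trivial.

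Second, and more seriously, the residue-matching step fails. When $\rho(c_i)$ is loxodromic with nonzero rotational part, its complex translation length is $\ell_\rho(c_i) + i\theta_i$ with $\theta_i \neq 0$, and the leading coefficient of the Hopf differential of $h$ at the puncture $p_i$ is a genuinely complex number governed by this \emph{complex} length, not by $\ell_\rho(c_i)$ alone. Wolf's parametrization of the Teichm\"uller space of hyperbolic structures with geodesic boundary (or cusps) by holomorphic quadratic differentials with at worst double poles requires real, nonpositive residues. So one cannot simply feed $\phi$ into Wolf's correspondence and ``read off'' $\rho_0$; the quadratic differential you need for Wolf is not $\phi$ but a modification of it, and the peripheral equality $\ell_\rho(c_i) = \ell_{\rho_0}(c_i)$ has to be extracted from a direct asymptotic analysis of the pullback metric $h^*g_{\H^3}$ near each puncture, coupled with an Ahlfors--Schwarz-type curvature comparison, rather than from equality of residues. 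Relatedly, your exhaustion argument for existence of $h$ glosses over the real difficulty: near a loxodromic puncture the natural equivariant models have infinite total energy, and uniform \emph{relative} energy bounds require a careful choice of model map at each end; ``control of $\rho$ on the peripheral loops'' is not by itself enough to prevent the limit from escaping to infinity or degenerating.
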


Consider the \textit{Zariski-Riemann compactification} of the character variety \cite{morgan1984valuations, otal2014compactification}. Consider the $\SL(2, \C)$-character variety of a surface, a rank $1$ valuation $v$ and a valuating sequence $[\rho_n]$ corresponding to $v$. As far as the author knows, it is difficult to determine whether $(\rho_n)_0$, the sequence of dominating Fuchsian representations is itself a valuating sequence or defines a rank $1$ valuation. Nevertheless the analogue for valuations, which we call \textit{tropical Fuchsian domination} holds.

\begin{ntheorem}[Tropical Fuchsian domination, March\'e-Simon \cite{marche2021automorphisms}]
    Let $\Sigma = \Sigma_{g, n}$ and $\Spec A$ be the $\SL(2, \C)$-character variety of $\Sigma$. Let $v$ be a rank $1$ valuation defined on $A$. There exists a measured lamination $\lambda$ and a rank $1$ valuation $v_\lambda$  such that for any $\gamma \in \pi_1 \Sigma$, 
    \[
    v(\tr_\gamma) \leq v_\lambda (\tr_\gamma)
    \]
    and the equality holds if $\gamma$ is a simple closed curve. Here we consider the valuation with a sign convention opposite to the usual one.
\end{ntheorem}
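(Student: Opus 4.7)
The strategy is to realize $v$ as a tropical limit of honest representations, apply Gupta--Su's theorem pointwise along that sequence, and extract $v_\lambda$ as a limit in Thurston's compactification. Concretely, by the description of the Zariski--Riemann boundary of the character variety \cite{morgan1984valuations, otal2014compactification}, choose a valuating sequence $[\rho_n] \in \Spec A$ and positive rescaling constants $c_n \to 0^+$ with
\[
v(f) = \lim_{n \to \infty} c_n \log \abs{f(\rho_n)} \qquad (f \in A),
\]
in the sign convention announced in the statement. Under the Morgan--Shalen--Bonahon dictionary, the function $\gamma \mapsto -2 v(\tr_\gamma)$ on $\pi_1 \Sigma$ is realized as the intersection pairing $i(\lambda_v, \cdot)$ with a measured lamination $\lambda_v$ canonically associated with $v$, and this identification is tight whenever $\gamma$ is a simple closed curve.

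For each $n$, apply Gupta--Su to produce a Fuchsian representation $(\rho_n)_0$ with $\ell_{\rho_n}(\gamma) \le \ell_{(\rho_n)_0}(\gamma)$ for all $\gamma \in \pi_1 \Sigma$ and equality on peripherals. By the compactness of Thurston's compactification $\mathcal{T}(\Sigma) \cup \mathcal{PML}(\Sigma)$, I may assume after passing to a subsequence that $[(\rho_n)_0]$ converges projectively to a measured lamination $\lambda_0$. After absorbing a scalar into $c_n$, define
\[
v_\lambda(f) := \lim_{n \to \infty} c_n \log \abs{f((\rho_n)_0)}
\]
on traces of simple closed curves, and extend it to a rank $1$ valuation on $A$ by the correspondence between measured laminations and valuations on the character variety \cite{morgan1984valuations, otal2014compactification}. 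Using the asymptotic $\abs{\tr_\gamma(\rho)} \sim e^{\ell_\rho(\gamma)/2}$ for loxodromic $\rho(\gamma)$, the length domination from Gupta--Su translates into $c_n \log \abs{\tr_\gamma(\rho_n)} \le c_n \log \abs{\tr_\gamma((\rho_n)_0)} + o(1)$, and passing to the limit in $n$ yields the required inequality $v(\tr_\gamma) \le v_\lambda(\tr_\gamma)$ for every $\gamma \in \pi_1 \Sigma$.

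Equality on simple closed curves reduces to the claim that $\lambda_v = \lambda_0$: once this is known, both sides compute to $-\tfrac{1}{2} i(\lambda_v, \gamma)$ by Bonahon. I expect this to be the main technical obstacle. Gupta--Su only supplies equality on peripherals, and a dominating Fuchsian representation may be strictly longer on non-peripheral simple curves. The plan is to upgrade the peripheral equality by either (i) refining the Gupta--Su construction to a canonical dominator --- for example an energy-minimizing or length-minimizing Fuchsian representative, so that $\ell_{(\rho_n)_0}(\gamma) - \ell_{\rho_n}(\gamma) = o(c_n^{-1})$ uniformly for $\gamma$ a simple closed curve --- or (ii) choosing $(\rho_n)_0$ a priori as a Fuchsian sequence along the Teichm\"uller ray prescribed by $\lambda_v$ and then verifying that this constrained choice still dominates $\rho_n$ in the sense of Gupta--Su up to $o(c_n^{-1})$. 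Both routes exploit the same rigidity principle: intersection numbers with simple closed curves are read off from the first-order tropical behaviour of any valuating sequence, so two Fuchsian approximations with coinciding leading behaviour on peripherals must share the same leading behaviour on every simple closed curve. With $\lambda_v = \lambda_0$ in hand, the Bonahon identities on both sides force equality $v(\tr_\gamma) = v_\lambda(\tr_\gamma)$ for simple closed $\gamma$.
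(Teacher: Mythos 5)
Your strategy is a genuinely different approach from the paper's, but it has an unfilled gap at exactly the point where the difficulty lies, and the paper itself warns against this route. In the introduction the author writes explicitly: ``As far as the author knows, it is difficult to determine whether $(\rho_n)_0$, the sequence of dominating Fuchsian representations is itself a valuating sequence or defines a rank $1$ valuation.'' That is precisely the step your proposal tries to carry out via a subsequential limit in the Thurston compactification, and you acknowledge (``I expect this to be the main technical obstacle'') that you cannot close it: plans (i) and (ii) are sketches of programs, not arguments. The projective limit of $[(\rho_n)_0]$ in $\mathcal{PML}(\Sigma)$ does not by itself show that the rescaled functionals $c_n \log\lvert f((\rho_n)_0)\rvert$ converge on $A$, nor that the limit satisfies the ultrametric inequality needed for a valuation, nor that the resulting lamination coincides with the one dictated by $v$ on simple closed curves. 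Gupta--Su only pins down peripheral lengths, and nothing in your argument controls the non-peripheral simple lengths of a dominating Fuchsian representative.

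There is also a more basic circularity in your step 2. The ``Morgan--Shalen--Bonahon dictionary'' attaches to $v$ a $\pi_1\Sigma$-action on the Bruhat--Tits real tree, with translation length $\max\{0,\,2v(\tr_\gamma)\}$; it does \emph{not} a priori attach a measured lamination with $i(\lambda_v,\gamma) = -2v(\tr_\gamma)$ on simple curves, because the Bruhat--Tits action need not be small. Producing such a $\lambda_v$ with equality on simple closed curves is exactly the content of the theorem. If you already had it, the rest of your construction would be unnecessary.

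The paper's proof takes a purely tree-theoretic route that sidesteps all of this. It works at the generic point: the tautological representation $\rho_{\textup{taut}}:\pi\to\SL(2,L)$ together with the extended valuation $\hat v$ gives a $\pi$-action on a Bruhat--Tits real tree $T$ with $\inf_{x\in T} d_T(x,\gamma x) = \max\{0,\,2v(\tr_\gamma)\}$. Morgan--Otal then supplies a measured lamination $\lambda$ and a $1$-Lipschitz $\pi$-equivariant morphism $T_\lambda\to T$, giving the domination $2v(\tr_\gamma)\le i(\lambda,\gamma)=v_\lambda(\tr_\gamma)$ for all $\gamma$ immediately; the equality on simple closed curves is part of the Morgan--Otal/Skora analysis of the folding map. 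No valuating sequences, no Gupta--Su, no compactness argument in $\mathcal{PML}$. If you want to salvage your approach you would need a quantitative statement beyond Gupta--Su controlling $\ell_{(\rho_n)_0}(\gamma)-\ell_{\rho_n}(\gamma)$ on all simple closed curves at the scale $c_n^{-1}$; this is not available in the literature you cite and appears to be an open difficulty.
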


\subsection*{Construction of $\Mod(\Sigma_{g,1})$ and $\Mod(\Sigma_{g, 2})$} Yu-Wei Fan and Junho Peter Whang constructed an exceptional isomorphism between the $\SL(2, \C)$-character variety of a surface with $1$ or $2$ boundary components and the moduli of points on complex sphere $S^3$. Notably the construction of the latter can be carried out without \textit{a priori} reference to a surface.

As we will explicitly describe the structure of the automorphism group, one can also define $\Mod(\Sigma_{g,1})$ and $\Mod(\Sigma_{g,2})$ without the \textit{a priori} notion of the surface, unless $(g, n) = (1, 2)$.

Recall the Dehn-Nielsen-Baer theorem identifies the mapping class group of surfaces with the outer automorphism group of a fundamental group of surfaces preserving the boundary conjugacy classes. Even in comparison to the formulation, which uses the idea of the fundamental group of surfaces, the moduli of points on sphere only concerns the Coxeter invariant given by $\mathrm{Pin}(4)$.

\subsection*{Organization of the paper}
Section \ref{section2} introduces the necessary setup. We will recall some $\SL_2$-character variety theory based on Przytycki-Sikora \cite{przytycki1997skein, przytycki2019skein} and define the relative automorphisms. We will adapt the notion of valuation at our setting. We will recall some real tree theory based on Morgan-Shalen \cite{morgan1984valuations} and some measured lamination theory.

In Section \ref{section3} we carry out the construction of March\'e-Simon \cite{marche2021automorphisms} in the slightly modified setting. The author also modify some arguments of \cite{marche2021automorphisms}, for example proposition \ref{DimIntersection}. Finally, using the known cases of lower complexity surface character variety, we prove the main theorem.

In Section \ref{section4}, we will explicitly construct the exceptional isomorphism between the $\SL(2, \C)$-character variety and the moduli of points on sphere. We will also construct $\Mod(\Sigma_{g,1})$ and $\Mod(\Sigma_{g,2})$ as a quotient of the automorphism group of the moduli of points on sphere.

\subsection*{Acknowledgements}
I am deeply grateful to Junho Peter Whang for introducing the problem and his result which led to Corollary \ref{mcg}. This work was partially supported by the Samsung Science and Technology Foundation under Project Number SSTF-BA2201-03.

\section{Ingredients}\label{section2}
\subsection{Character varieties}

Let $g, n \geq 0$. From now on, $\Sigma = \Sigma_{g,n}$ always denotes a connected oriented surface of genus $g$ with $n$ punctures with $\chi(\Sigma) < 0$.  Denote $\pi = \pi_1 \Sigma$. Let $R$ always denote a commutative ring.

The $\SL_2$-\textit{character variety of $\Sigma = \Sigma_{g,n}$ over $R$} is the GIT quotient scheme
\[
    \Rep(\pi, \SL_2) \sslash \SL_2
\]
where $\SL(2, R)$ acts on $\Rep(\pi, \SL_2)$ by conjugation. There is a natural model over $\Z$ namely the \textit{$\SL_2$-skein character algebra}
\[
X(\Sigma) = \Spec \Z[\tr_\gamma]_{\gamma \in \pi} / (\tr_{1} - 2, \tr_\alpha\tr_\beta - \tr_{\alpha \beta} - \tr_{\alpha \beta^{-1}}).
\]
One can consider $\tr_\gamma$ as a regular function on the character variety over $R$ given by the character of the representation, $\rho \mapsto \tr \rho(\gamma)$. If $k$ is an algebraically closed field of characteristic zero, taking $k$-points of the variety recovers the closed conjugation orbits of $\SL(2, k)$-representations. $X(\Sigma)$ is an integral scheme \cite{przytycki2019skein}.

Note that $\tr_\gamma = \tr_1 \tr_\gamma - \tr_\gamma = \tr_{\gamma^{-1}}$ so one does not have to orient the immersed (smooth) curve realizing $\gamma$. $\tr_{\gamma \delta} = \tr_\gamma \tr_\delta - \tr_{\gamma \delta^{-1}} = \tr_\delta \tr_\gamma - \tr_{\delta \gamma^{-1}} = \tr_{\delta \gamma}$ so it suffices to consider the free homotopy classes of a curve realizing $\gamma$. 

Denote the function field of $X(\Sigma)$ as $K$. There exists a \textit{tautological representation} $\rho_{\text{taut}} : \pi \rightarrow \SL(2, L)$ where $L/K$ is a finite extension and 
\[
    \tr \rho_{\text{taut}}(\gamma) = \tr_{\gamma}
\]
for all $\gamma \in \pi$ \cite{marche2015character}. \ \\

\subsection{Relative automorphisms}

\begin{definition}
    Let $\Sigma = \Sigma_{g, n}$ and $\{c_i\}_{i=1}^n \subseteq \pi$ be $n$ curves each homotopic to a puncture. Let $A_\Sigma =  \Z[\tr_\gamma]_{\gamma \in \pi} / (\tr_{1} - 2, \tr_\alpha\tr_\beta - \tr_{\alpha \beta} - \tr_{\alpha \beta^{-1}})$. The \textit{relative automorphism group} of $X(\Sigma)$ is
    \begin{align*}
        \Aut^*(X(\Sigma)) =& \{ \phi \in \Aut_{\mathbf{Ring}}(A_\Sigma) : \phi(\tr_{c_i}) = \tr_{c_i} \} \\
        =& \Aut_{\Z[\tr_{c_i}]_{i=1}^n\text{-}\mathbf{Alg}}(A_\Sigma).
    \end{align*}
\end{definition}

\begin{remark}
    For any domain $S$, the proof will work for the base change by $S$ and then
    \[
    \Aut^*(X(\Sigma)) = \Aut^*(X(\Sigma)_S) = \Aut_{S[\tr_{c_i}]_{i=1}^n\text{-}\mathbf{Alg}}(S \otimes A_\Sigma).
    \]
    In particular, we may let $S = \C$ and consider relative automorphisms of the $\SL(2, \C)$-character variety as an analytic variety. We write $\C[X(\Sigma)] = \C \otimes A_\Sigma$.
\end{remark}

A \textit{multicurve} is a finite collection of disjoint simple closed curves. A multicurve is \textit{reduced} if it does not contain any contractible components. If $\Gamma = \sqcup_i \gamma_i$, we write $\tr_\Gamma = \prod_i \tr_{\gamma_i}$. The canonical basis of $A_\Sigma$ is well known.

\begin{theorem}[Przytycki-Sikora, \cite{przytycki1997skein}]
    $A_\Sigma$ is a free abelian group and $\{ \tr_\Gamma \}$, where $\Gamma$ runs over the reduced multicurves on $\Sigma$, forms a basis of $A_\Sigma$.
\end{theorem}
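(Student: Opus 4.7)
The plan is to establish spanning and linear independence of $\{\tr_\Gamma\}$ separately.

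\emph{Spanning.} Given a product of trace generators $\tr_{\gamma_1} \cdots \tr_{\gamma_k}$, realize $\gamma_1, \ldots, \gamma_k$ by smoothly immersed loops on $\Sigma$ in general position, so their union has only transverse double points. At each double point (self-intersection within a component or a crossing between two components), apply the Fricke identity $\tr_\alpha \tr_\beta = \tr_{\alpha\beta} + \tr_{\alpha\beta^{-1}}$, viewed locally as expressing the intersecting pair as a sum over its two smoothings. Every such move strictly decreases the total number of double points, so induction on that quantity writes any element of $A_\Sigma$ as a $\Z$-linear combination of $\tr_M$ with $M$ a disjoint union of embedded loops. Contractible components are removed using $\tr_1 = 2$, leaving a combination of $\tr_\Gamma$ for reduced multicurves $\Gamma$.

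\emph{Linear independence.} Work over $\C$ and suppose $\sum_{i=1}^N a_i \tr_{\Gamma_i} = 0$ for pairwise distinct reduced multicurves $\Gamma_i$ and $a_i \in \C$. Choose a filling measured lamination $\lambda \in \mathcal{ML}(\Sigma)$ for which the values $i(\Gamma_i, \lambda)$ are pairwise distinct; such a $\lambda$ exists because $\Gamma \mapsto i(\Gamma, \cdot)$ is injective on reduced multicurves and each coincidence locus $\{i(\Gamma_i, \cdot) = i(\Gamma_j, \cdot)\}$ is a nowhere dense piecewise-linear subset of $\mathcal{ML}(\Sigma)$. Along a Teichm\"uller ray $\rho_t$ whose image in Thurston's compactification converges to $[\lambda]$, Thurston's length-function asymptotic gives $\ell_{\rho_t}(\Gamma)/t \to i(\Gamma, \lambda)$, hence $|\tr_{\rho_t}(\Gamma)| = C_\Gamma(t) \exp(t\, i(\Gamma, \lambda)/2)$ with $\log C_\Gamma(t) = o(t)$. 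Relabeling so that $i(\Gamma_1, \lambda)$ is the unique maximum, dividing the vanishing relation by $\exp(t\, i(\Gamma_1, \lambda)/2)$ and sending $t \to \infty$ forces $a_1 = 0$; induction on $N$ concludes.

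The main obstacle is the linear independence step, and within it the quantitative control needed to isolate a leading exponential from a sum with distinct rates. The essential geometric input is the separation of any finite collection of reduced multicurves by intersection number against a generic filling lamination, which rests on the injectivity of $\Gamma \mapsto i(\Gamma, \cdot)$ on $\mathcal{ML}(\Sigma)$; granted this, the leading-term extraction along a Teichm\"uller ray is routine.
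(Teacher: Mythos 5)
Your spanning argument is correct and is the standard skein-theoretic reduction: resolve crossings via the Fricke relation, inducting on the number of double points, then remove trivial loops via $\tr_1 = 2$. No issues there.

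The linear independence argument is a genuinely different route from Przytycki--Sikora (who argue algebraically via the skein module and a confluence-type analysis), and it is an attractive idea. But as written it has a real gap when $\Sigma$ has punctures. A \emph{reduced} multicurve is only required to have no nullhomotopic components, so it may contain components parallel to a puncture. A peripheral curve has zero intersection number with every measured lamination, so if $\Gamma$ is reduced and $\Gamma' = \Gamma \sqcup c_i$ for some peripheral $c_i$, then $i(\Gamma, \cdot) = i(\Gamma', \cdot)$ identically on $\mathcal{ML}(\Sigma)$. Thus your assertion that $\Gamma \mapsto i(\Gamma, \cdot)$ is injective on reduced multicurves is false for $n > 0$, and the corresponding coincidence locus $\{i(\Gamma, \cdot) = i(\Gamma', \cdot)\}$ is all of $\mathcal{ML}(\Sigma)$, not a nowhere dense PL set. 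Your generic filling lamination therefore cannot separate the terms, and the leading-term extraction does not isolate a single coefficient.

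The fix is essentially the restatement recorded just after the theorem in the paper: first run your lamination argument only over \emph{essential} multicurves (those with no peripheral or trivial components), where the injectivity of $\Gamma \mapsto i(\Gamma, \cdot)$ does hold. This gives $\Z[\tr_{c_1}, \ldots, \tr_{c_n}]$-linear independence of $\{\tr_\Gamma\}_{\Gamma \text{ essential}}$: along a Teichm\"uller (or stretch) ray with the peripheral traces held at fixed values $(t_1, \ldots, t_n)$, each peripheral factor is a constant, and your exponential-rate comparison forces each coefficient, evaluated at $(t_1, \ldots, t_n)$, to vanish. You then need a second step: that $\tr_{c_1}, \ldots, \tr_{c_n}$ are algebraically independent over $\Z$ in $A_\Sigma$, so that a polynomial in them vanishing for all allowed $(t_1, \ldots, t_n)$ is identically zero. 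This follows because the peripheral length (hence trace) coordinates take arbitrary values in an open subset of $\R^n$ over Fenchel--Nielsen space. Together these two steps recover the $\Z$-basis statement for reduced multicurves. Also, you should say explicitly that you pick $\lambda$ filling \emph{and} uniquely ergodic (a full-measure condition), so that the Teichm\"uller ray indeed converges to $[\lambda]$ in Thurston's boundary; a general filling lamination need not be the limit of a ray.
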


A simple closed curve on $\Sigma$ is called \textit{essential} if it is not nullhomotopic and not homotopic to the punctures. As a restatement of the above, $\{ \tr_\Gamma \}$, where $\Gamma$ runs over the \textit{essential} multicurves on $\Sigma$, forms a basis of $A_\Sigma$ as a $\Z[\tr_{c_i}]_{i=1}^n$-module . The expansion of an element of the character algebra with respect to the (essential) multicurve basis will be called the \textit{(essential) multicurve decomposition}.

$X(\Sigma)$ has a natural action of $\Out(\pi)$ by precomposition. Write $\{ x_i \}_{i=1}^n$ as punctures of $\Sigma$. The Dehn-Nielsen-Baer theorem says the (extended relative) \textit{mapping class group} of $\Sigma$
\[
\Mod(\Sigma) = \ker (\pi_0\Diff(\Sigma, \{x_i\}_{i=1}^n) \rightarrow S_n)
\]
embeds into $\Out(\pi)$. This yields a natural morphism $\Mod(\Sigma) \rightarrow \Aut^*(X(\Sigma))$.

\begin{remark}
    One can consider $\Sigma = \Sigma_{g,n}$ as a punctured surface or a compact surface with boundaries, not affecting $X(\Sigma)$. For surfaces with nonempty boundaries, the natural morphism $\Mod(\Sigma) \rightarrow \Aut^*(X(\Sigma))$ has a nontrivial kernel containing boundary Dehn twists.
\end{remark}

\begin{example}
    Let $\Sigma = \Sigma_{1, 1}$ and write $\pi = F_2 = \langle a, b \rangle$. Then $A_\Sigma = \Z[x, y, z]$ where
    \[
    (x, y, z) = (\tr_a, \tr_b, \tr_{ab})
    \]
    and
    \[
    \tr_{c_1} = \tr_{[a,b]} = x^2 + y^2 + z^2 - xyz - 2.
    \]
    Consider three mapping classes generating $\Mod(\Sigma) = \GL(2, \Z)$
    \[
    \begin{pmatrix}
        0 & 1 \\
        1 & 0
    \end{pmatrix},
    \begin{pmatrix}
        1 & 1 \\
        0 & 1
    \end{pmatrix},
    \begin{pmatrix}
        1 & 0 \\
        0 & -1
    \end{pmatrix}
    \]
    realized by the automorphism of $F_2$ as
    \begin{align*}
        (a, b) &\mapsto (b, a) \\ 
        (a, b) &\mapsto (a, ab) \\
        (a, b) &\mapsto (a, b^{-1}).
    \end{align*}
    
    This yields a homomorphism $\GL(2, \Z) \rightarrow \Aut^*(X(\Sigma))$ whose image is generated by three transformations
    \begin{align*}
        (x, y, z) &\mapsto (y, x, z) \\
        (x, y, z) &\mapsto (x, xy-z, y) \\
        (x, y, z) &\mapsto (x, y, xy-z).
    \end{align*}
    This also is equal to the group generated by a single Vieta involution and $S_3$ permuting coordinates. Its kernel is generated by
    \[
    \begin{pmatrix}
        -1 & 0 \\
        0 & -1
    \end{pmatrix}
    \]
    which corresponds to the hyperelliptic involution.

\end{example}

\subsection{The curve complex}

\begin{definition}
    The \textit{curve complex} $\Ccal(\Sigma)$ is a simplicial complex whose vertices are the free homotopy classes of essential simple closed curves. They form a simplex if and only if they are mutually disjoint.
\end{definition}

Here is the rigidity theorem of the curve complex.

\begin{theorem}[Luo, \cite{luo1999automorphisms}]
    Let $3g + n \geq 5$. The group homomorphism $\Mod(\Sigma) \rightarrow \Aut(\Ccal(\Sigma))$ is surjective unless $(g, n) = (1, 2)$ and injective unless $(g, n) = (1, 2), (2, 0)$. The kernel is generated by hyperelliptic mapping classes. If $(g, n) = (1, 2)$, the image of the morphism is an index $5$ subgroup of $\Aut(\Ccal(\Sigma))$.
\end{theorem}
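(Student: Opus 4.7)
The plan is to construct an inverse to the map $\Mod(\Sigma)\to\Aut(\Ccal(\Sigma))$ modulo its kernel by showing every combinatorial automorphism of $\Ccal(\Sigma)$ arises from a self-homeomorphism of $\Sigma$. Following Ivanov--Korkmaz--Luo, I would attach to each simplex $\sigma=\{\gamma_1,\dots,\gamma_k\}$ its \emph{topological type}: the homeomorphism class of the cut surface $\Sigma|_\sigma$ together with the gluing pattern on the boundary. Each invariant one needs---separating versus non-separating, whether two disjoint curves cobound an annulus, whether a maximal simplex is a pants decomposition, the topological type of each pants piece, etc.---can be read off purely from the combinatorics of the link in $\Ccal(\Sigma)$ (for instance, a pants decomposition is characterised as a maximal simplex each vertex of which lies in only finitely many other maximal simplices). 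Hence any $\varphi\in\Aut(\Ccal(\Sigma))$ preserves topological type simplex by simplex.

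Next I would use the action on pants decompositions to reconstruct a homeomorphism. Fix a pants decomposition $P=\{\gamma_1,\dots,\gamma_N\}$ with $\varphi(P)=P'$. The preserved topological type gives a piecewise bijection between the pair-of-pants components of $\Sigma|_P$ and $\Sigma|_{P'}$, which I would glue along boundaries according to how $\varphi$ permutes the $S$- and $A$-moves of Hatcher--Thurston linking $P$ to its neighbours in the pants graph. Connectedness of the pants graph then propagates the construction to a global self-homeomorphism $f\colon\Sigma\to\Sigma$ whose induced combinatorial action agrees with $\varphi$. Uniqueness of $[f]\in\Mod(\Sigma)$ modulo the hyperelliptic kernel follows from the Alexander method, which detects a mapping class from its action on a sufficiently rich collection of simple curves; the same argument identifies the kernel of $\Mod(\Sigma)\to\Aut(\Ccal(\Sigma))$ with the hyperelliptic involutions, which are present precisely in the cases $(g,n)\in\{(2,0),(1,2)\}$.

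The main obstacle is the exceptional case $(g,n)=(1,2)$. Here the hyperelliptic quotient with the two punctures swapped induces a simplicial isomorphism $\Ccal(\Sigma_{1,2})\cong\Ccal(\Sigma_{0,5})$, under which $\Aut(\Ccal(\Sigma_{1,2}))\cong\Mod^{\pm}(\Sigma_{0,5})$. The image of $\Mod(\Sigma_{1,2})$ in this group is precisely the stabiliser of the distinguished puncture of $\Sigma_{0,5}$ coming from the swapped pair, giving index $5$ because $\Mod^{\pm}(\Sigma_{0,5})$ acts transitively on the five punctures. Verifying that the reconstruction procedure of the first two paragraphs breaks exactly in this case---because the transpositions of the five punctures of $\Sigma_{0,5}$ not fixing the distinguished one fail to lift to homeomorphisms of $\Sigma_{1,2}$---and nowhere else is where the delicate combinatorics of low-complexity surfaces is concentrated.
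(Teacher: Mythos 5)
The paper cites this result from Luo (building on Ivanov and Korkmaz) without supplying a proof, so the only comparison possible is against the literature argument; your sketch is recognizably the Ivanov--Korkmaz--Luo strategy (read topological type off the combinatorics, reconstruct a homeomorphism, invoke the Alexander method, then handle $\Sigma_{1,2}$ via $\Ccal(\Sigma_{1,2})\cong\Ccal(\Sigma_{0,5})$). However, there are concrete problems. Your parenthetical characterisation of pants decompositions is false: for $3g+n\geq 5$ \emph{every} maximal simplex of $\Ccal(\Sigma)$ is already a pants decomposition, so there is nothing to characterise, and moreover every vertex lies in \emph{infinitely} many maximal simplices, so the ``finitely many'' condition is never satisfied. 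The genuine combinatorial work is elsewhere: distinguishing separating from nonseparating curves, and recognising when two disjoint curves together with boundary cobound a pair of pants, from the structure of links -- a step you assert but whose content is the heart of the theorem. The reconstruction step is also off in flavour: Luo's argument does not pass through the Hatcher--Thurston pants graph and its $S$- and $A$-moves; the standard route is to show the simplicial automorphism preserves geometric intersection number one (again via links), then deduce it is induced by a homeomorphism by an Alexander-method argument applied to a suitable filling system of curves. As written, ``glue along boundaries according to how $\varphi$ permutes the moves'' does not produce a well-defined map without additional input.

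Two smaller points: the phrase ``hyperelliptic involutions, which are present precisely in the cases $(g,n)\in\{(2,0),(1,2)\}$'' should say that hyperelliptic involutions \emph{act trivially on $\Ccal(\Sigma)$} precisely in these cases (within the range $3g+n\geq 5$); such involutions of course exist on many surfaces. And in the $(1,2)$ case, since the paper's $\Mod$ is the pure extended mapping class group, the index-$5$ statement really requires identifying the image with the stabiliser of the distinguished puncture inside $\Aut(\Ccal(\Sigma_{0,5}))\cong\Mod^{\pm}(\Sigma_{0,5})$ and using transitivity of the latter on the five punctures -- your outline has the right idea, but these identifications need to be pinned down to get exactly $5$ rather than some other index.
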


Let $\Aut(\Ccal(\Sigma))$ be the automorphism group of $\Ccal(\Sigma)$ as simplicial complex. By the result above, the morphism $\Mod(\Sigma) \rightarrow \Aut^*(X(\Sigma))$ factors as
\begin{align}\label{mod to aut}
    \Mod(\Sigma) \rightarrow \Aut(\Ccal(\Sigma)) \hookrightarrow \Aut^*(X(\Sigma))
\end{align}

Here the second arrow is injective, since given for any $\varphi \in \Aut(\Ccal(\Sigma))$, the corresponding relative automorphism is given by $\tr_\gamma \mapsto \tr_{\phi(\gamma)}$ for any class of a simple closed $\gamma \in \pi$.

\subsection{Valuations}\label{section2.2}

\begin{definition}\label{valdef}
    Let $R$ be a domain and $A$ be an $R$-algebra as well as a domain. Let $\Lambda$ be a submonoid of $(\R_{\geq 0}, +)$. A \textit{(boundary rank 1) valuation} on $A$ is a function $v : A \rightarrow \{ -\infty \} \cup \Lambda $ satisfying
    \begin{enumerate}
        \item $v(0) = -\infty$, $v(r) = 0$ if $r \in R \setminus \{ 0 \}$.
        \item $v(a) > 0$ for some $a \in A$.
        \item $v(fg) = v(f) + v(g)$ for all$ f, g \in A$.
        \item $v(f + g) \leq \max \{ v(f), v(g) \}$, with equality if $v(f) \neq v(g)$ for all $f, g \in A$.
    \end{enumerate}
\end{definition}

The valuation $v$ naturally extends to a valuation on its field of fractions $K(A)$ by letting $v(f/g) = v(f) - v(g)$, where $\Lambda$ extends to its group completion naturally embedded in $\R$. Then $A$ is \textit{not contained} in the \textit{valuation ring}
\[
\Ocal_v = \{ x \in K(A) : v(x) \leq 0 \}
\]
of $K(A)$ unless $A$ is a field. Note that a valuation can be defined on a field $K$ without specifying a domain $A \subseteq K$. Denote the space of valuations on an $R$-algebra $A$ by $\Vcal_R(A)$. $\Vcal_R(A)$ has a natural pointwise convergence topology and the $\R_{>0}$-action given by changing the monoid embedding.

\begin{remark}
     Let $R, A$ be as in Definition \ref{valdef}. The \textit{Berkovich analytic compactification} of $\Spec A$ over $R$ is the collection of rank $1$ valuations on $\kappa(s)$ extending the valuation on $R$ where $s \in \Spec A$ and $\kappa(s)$ denotes the residue field at $s$. As the action of $\Aut_{R\text{-}\mathbf{Alg}}(A)$ on $\Spec A$ preserves its generic point, the action also preserves the Berkovich boundary points at the generic point, especially extending the trivial valuations along $R$. Thus $\Aut_{R\text{-}\mathbf{Alg}}(A)$ acts on $\Vcal_R(A)$, which is explicitly written as
     \[
     (\varphi \cdot v)(a) = v(\varphi^{-1} a)
     \]
     where $\varphi \in \Aut_{R\text{-}\mathbf{Alg}}(A)$, $v \in \Vcal_R(A)$ and $a \in A$. This is also a continuous action.
\end{remark}

The multicurves are themselves forming a basis of the character algebra $A_\Sigma$ as well as an integral \textit{geodesic lamination} compactifying the Teichm\"uller space. We will construct the pairing
\[
(\tr_\gamma, \tr_\delta) = v_{\gamma}(\tr_\delta) = i(\gamma, \delta)
\]
by mimicking the duality map of Fock-Goncharov. Reflecting this, the valuation defined here is different from its sign with the traditional one. 

\subsection{Real buildings}

\begin{definition}[Morgan-Shalen, \cite{morgan1984valuations}]
    Let $K$ be a field with a valuation $v$. The \textit{Bruhat-Tits real tree} with respect to $\GL(2, K)$ is a complete metric space $T$ defined as follows:
    \begin{enumerate}
        \item There are subsets called \textit{vertices} of $T$
        \[
        V(T) = \{ \text{Homothety classes of complete $\Ocal_v$-lattices in $K^2$} \}.
        \]
        \item The metric of $T$ between vertices is given as follows : for each two vertices $[L_1], [L_2]$ represented by lattices $L_1, L_2$, there exists $a \in K$ and $x \in \Ocal_v$ such that $aL_1 \leq L_2$ and $L_2 / aL_1 \simeq \Ocal_v / x\Ocal_v$. Then the distance $d_T$ between the vertices is defined as
        \[
        d_T([L_1], [L_2]) = v(x).
        \]
        In particular, the $GL(2, K)$-action on $V(T)$ is transitive.
        \item For any two vertices $v_1, v_2$ of $T$ with $d_T(v_1, v_2) = d$, there exists an isometric embedding $[0, d] \rightarrow T$ called \textit{arcs}. This produces a subset $T_v \subseteq T$ consisting of vertices and arcs.
        \item $T$ is the metric completion of $T_v$. 
    \end{enumerate}
\end{definition}

This is the simplest type of a \textit{real} Euclidean building. We call a valuation \textit{discrete} if the value group $\img v$ is discrete. If $v$ were discrete, the constructed metric tree is just a metric rescaling of the \textit{Bruhat-Tits tree}.

Note that $\GL(2, K)$ acts by isometries of the vertices of $T$, which extends uniquely to an isometry of $T$. Furthermore, the \textit{translation length} of an isometry $\gamma \in \SL(2, K) \leq \GL(2, K)$ is completely determined by its trace,
\[
\inf_{x \in T}(x, \gamma \cdot x) = \max\{0, 2 v(\tr \gamma)\}.
\]

Denote $\Vcal = \Vcal_{\Z[\tr_{c_i}]_{i=1}^n}(A_\Sigma) $. If one choose a $v \in \Vcal$, then the tautological representation $\rho_{\text{taut}} : \pi \rightarrow \SL(2, L)$ gives an action of $\pi$ to the Bruhat-Tits real tree with respect to $\hat{v}$, extending $v$ to $L$. Then the translation length of $\gamma \in \pi$ is given as
\[
\inf_{x \in T}(x, \gamma \cdot x) = \max\{0, 2 \hat{v}(\tr \rho_{\text{taut}}(\gamma)) = 2 v (\tr_\gamma) \}.
\]

\subsection{Measured geodesic laminations}

In this subsection, we restrict ourselves to the case that $\Sigma = \Sigma_{g, n}$ is a surface endowed with a finite-volume hyperbolic structure. We will recall the notion of measured geodesic laminations of a punctured surface, via geodesic currents \cite{bonahon1988geometry, lindenstrauss2008ergodic}. 

Write $\Gamma \leq \PSL(2, \R)$ as the holonomy of $\Sigma$ and let $S \subseteq \partial \H^2$ be its nonparabolic points. Note $\Gcal(\Sigma) = (S \times S \setminus \Delta) / (\Z/2\Z)$ parametrizes complete unoriented geodesics on $\H^2$ whose projection to $\Sigma$ is compactly supported. A $\Gamma$-invariant locally finite positive Borel measure on $\Gcal(\Sigma)$ is called a \textit{geodesic current} on $\Sigma$. 

The current space has a natural weak-$*$ topology and a continuous bilinear form $i$ extending the geometric intersection number, which is called the \textit{intersection form}. The intersection form corresponds a weighted multicurve to a current via the pairing $\Gamma \mapsto i(\lambda,\Gamma)$.

A \textit{measured geodesic lamination} is a geodesic current whose self-intersection vanishes. For any measured geodesic lamination, there exists a corresponding unique \textit{transverse measure} on a \textit{geodesic lamination}. The space of measured geodesic laminations is denoted by $\mathcal{ML}(\Sigma)$. From now on we call a measured geodesic lamination by a \textit{measured lamination}.

$\mathcal{ML}(\Sigma)$ has a natural \textit{train track} coordinate system. This gives a piecewise linear structure on $\mathcal{ML}(\Sigma)$ and there exists a Lebesgue measure called the \textit{Thurston measure}. 

Weighted essential simple closed curves are dense in $\mathcal{ML}(\Sigma)$. Hence every intersection with a measured lamination can be comprehended as a limit of taking intersection with weighted essential simple closed curves.

A measured lamination $\lambda$ is \textit{filling} if for any essential simple closed curve $\gamma$, $i(\lambda, \gamma) > 0$. Almost all measured laminations are filling.

A measured lamination $\lambda$ is \textit{uniquely ergodic} if there exists a measured lamination $\lambda'$ with $\Supp \lambda = \Supp \lambda'$ then $\lambda = C \lambda'$ for some $C > 0$. Almost all measured laminations are uniquely ergodic \cite{masur1982interval}.

\section{March\'e--Simon Construction}\label{section3}

\subsection{Measured laminations as valuations}

The Teichm\"uller space $\Tcal(\Sigma)$ naturally embeds into the real points of $X(\Sigma)$. Furthermore one can construct an embedding between the compactification of the two spaces. Here we will deal with the unprojectivized versions, $\mathcal{ML}(\Sigma)$ and $\Vcal$ in order to compare the action of $\Aut^*(X(\Sigma))$ explicitly.

\begin{proposition}[March\'e-Simon, \cite{marche2021automorphisms}]
    Let $\lambda \in \mathcal{ML}(\Sigma)$ be a measured lamination. Then there exists a corresponding valuation defined by
    \[
    v_{\lambda}(\tr_{\Gamma}) = i(\lambda, \Gamma) = \sum_j i(\lambda, \gamma_j)
    \]
    and
    \[
    v_\lambda\left(\sum_{\Gamma} c_{\Gamma} \tr_{\Gamma}\right) = \max \{ v_\lambda(\tr_{\Gamma}) : c_{\Gamma} \neq 0 \}
    \]
    where $\Gamma = \bigsqcup_j \gamma_j$ is an essential multicurve on $\Sigma$.
\end{proposition}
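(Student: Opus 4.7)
The plan is to verify the four axioms of Definition~\ref{valdef} for $v_\lambda$. Axiom~(1) holds by the convention that an empty maximum equals $-\infty$. Axiom~(2) follows because measured geodesic laminations are supported in the compact interior of $\Sigma$, so $i(\lambda, c_i) = 0$ for every peripheral class $c_i$; combined with the fact that each element of the coefficient ring $\Z[\tr_{c_1}, \ldots, \tr_{c_n}]$ is concentrated in the empty-multicurve basis element, this gives $v_\lambda = 0$ on the scalar ring. Axiom~(4) is a formal consequence of uniqueness of the essential multicurve decomposition: $f+g$ decomposes coefficient-wise, so any basis element contributing to $f+g$ already contributes to $f$ or $g$, yielding $v_\lambda(f+g) \leq \max\{v_\lambda(f), v_\lambda(g)\}$; equality when $v_\lambda(f) \neq v_\lambda(g)$ follows since the leading basis element of the larger summand cannot be cancelled.

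The substantive content is axiom~(3), multiplicativity. The key case will be
\[
v_\lambda(\tr_\Gamma \cdot \tr_{\Gamma'}) = i(\lambda, \Gamma) + i(\lambda, \Gamma')
\]
for essential multicurves $\Gamma, \Gamma'$. My plan is to place $\Gamma \cup \Gamma'$ in minimal position and iterate the skein identity $\tr_\alpha \tr_\beta = \tr_{\alpha\beta} + \tr_{\alpha\beta^{-1}}$ at each of the $k = i(\Gamma, \Gamma')$ transverse crossings to obtain
\[
\tr_\Gamma \tr_{\Gamma'} = \sum_{S} \tr_{C_S},
\]
where $S$ ranges over the $2^k$ smoothings, $C_S$ is the resulting disjoint union of simple closed curves, and every coefficient is $+1$. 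Absorbing null-homotopic components via $\tr_1 = 2$ and peripheral components as scalar factors $\tr_{c_i}$ into the coefficient ring produces an expansion in the essential multicurve basis with all structure constants lying in $\Z_{\geq 0}[\tr_{c_1}, \ldots, \tr_{c_n}]$.

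A local analysis at each crossing $p$ controls the intersection numbers. In a small neighborhood of $p$, every leaf of $\lambda$ crosses $\Gamma \cup \Gamma'$ twice, and depending on whether the chosen smoothing is transverse or parallel to $\lambda$ at $p$, the resulting two arcs either preserve both contributions or allow the leaf to be isotoped off the smoothed curve. This yields $i(\lambda, C_S) \leq i(\lambda, \Gamma) + i(\lambda, \Gamma')$ for every $S$, with equality attained by the smoothing $S^\star$ transverse to $\lambda$ at every crossing. The essential multicurve $\Gamma^\star$ extracted from $C_{S^\star}$ still realizes $i(\lambda, \Gamma^\star) = i(\lambda, \Gamma) + i(\lambda, \Gamma')$ since stripped peripheral and null-homotopic components contribute zero. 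Non-negativity of the structure constants then prevents cancellation of the $\tr_{\Gamma^\star}$-coefficient, providing the lower bound; the matching upper bound follows from axiom~(4) applied to the expansion together with the intersection inequality above.

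For general $f, g \in A_\Sigma$ the full multiplicativity $v_\lambda(fg) = v_\lambda(f) + v_\lambda(g)$ reduces to the single-basis case by expanding in the essential multicurve basis. The main obstacle will be non-cancellation of leading contributions: distinct leading pairs $(\Gamma_0, \Gamma'_0)$ of $f, g$ could in principle produce the same maximal-$v_\lambda$ essential multicurve with opposite-sign coefficient contributions. My plan is to resolve this by density and continuity. For fixed $f, g$ only finitely many essential multicurves appear in their decompositions, so the set of $\lambda \in \mathcal{ML}(\Sigma)$ making all finitely many relevant sums $i(\lambda, \Gamma) + i(\lambda, \Gamma')$ pairwise distinct (and hence forcing a unique leading pair) is the complement of finitely many proper linear subspaces in train-track coordinates, hence dense. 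Both sides of the desired equality are continuous functions of $\lambda$ (each is the maximum of finitely many linear functions in train-track coordinates), so the equality on the dense set of generic $\lambda$ extends to all $\lambda$ by continuity.
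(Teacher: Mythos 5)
Your route differs from the paper's: the paper offloads the entire geometric content onto the theorem of D.~Thurston quoted immediately after the proposition, and then the verification of the valuation axioms is formal; you instead aim to re-derive the geometric input directly by a skein-theoretic state sum. The formal parts of your argument are fine --- axioms (1), (2), (4), the absorption of trivial and peripheral components into the coefficient ring $\Z[\tr_{c_i}]$, the observation that the structure constants of the essential multicurve basis lie in $\Z_{\geq 0}[\tr_{c_i}]$ so no cancellation occurs, and the density-and-continuity reduction of general $f,g$ to the basis case (both $\lambda \mapsto v_\lambda(fg)$ and $\lambda \mapsto v_\lambda(f)+v_\lambda(g)$ are piecewise-linear, hence continuous, in train-track coordinates, so agreement on a dense set suffices).

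The gap is in the claimed equality $i(\lambda, C_{S^\star}) = i(\lambda,\Gamma) + i(\lambda,\Gamma')$ for the distinguished smoothing $S^\star$. Two things are missing. First, ``transverse to $\lambda$ at $p$'' is not a well-defined local choice: near a crossing $p$ of $\Gamma$ and $\Gamma'$, different leaves of $\lambda$ can favor different smoothings, and to get a coherent choice you already need to have realized $\lambda$, $\Gamma$, $\Gamma'$ geodesically in a fixed hyperbolic metric and argued about the combinatorics of the resulting train track. Second, and more seriously, even with the naive local count in hand, $i(\lambda, C_{S^\star})$ is by definition a minimum over the isotopy class of $C_{S^\star}$, so you must show $C_{S^\star}$ is in minimal position with $\lambda$ --- equivalently, that no bigons between them are created by the smoothing. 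This is a global assertion that no local analysis at the crossings can settle; it is exactly the nontrivial content of D.~Thurston's geometric intersection theorem (that smoothing a geodesic configuration yields a taut representative), which is what the paper cites. As written, your proof asserts the conclusion of that theorem without supplying a proof of it, so the proposal does not stand on its own. Either invoke D.~Thurston as the paper does, or supply the hyperbolic-geometry/train-track argument that the taut smoothing is already in minimal position with $\lambda$.
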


The crucial part of the proof is to verify the consistency
\[
v_{\lambda}(\tr_\gamma) = i(\lambda, \gamma)
\]
for \textit{possibly nonsimple} $\gamma \in \pi$, which is a reformulation of the following theorem.

\begin{theorem}[D. Thurston, \cite{thurston2008geometric}]
    Let $\gamma \in \pi$ and $\tr_\gamma = \sum_{\Gamma} c_{\Gamma} \tr_{\Gamma}$ be its essential multicurve decomposition. Then for every simple closed curve $\delta$ in $\Sigma$,
    \[
    i(\delta, \gamma) = \max \{ i(\delta, \Gamma) : c_{\Gamma} \neq 0 \}.
    \]
\end{theorem}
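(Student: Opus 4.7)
My plan is to establish the two inequalities constituting the equality separately, by induction on the geometric self-intersection number $N(\gamma)$ of $\gamma$ represented as a closed geodesic on $\Sigma$ (for some auxiliary complete hyperbolic structure), with $\delta$ realized as a simple closed geodesic so that $|\gamma \cap \delta| = i(\delta, \gamma)$. The base case $N(\gamma) = 0$ is trivial: $\gamma$ is already a simple closed curve, so $\tr_\gamma$ is itself an essential multicurve basis element.

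For the upper bound $\max_{c_\Gamma \neq 0} i(\delta, \Gamma) \leq i(\delta, \gamma)$, I would pick a self-intersection $p$ of $\gamma$ and write $\gamma = \alpha \beta$ for loops $\alpha, \beta$ based at $p$. The skein relation
\[
\tr_\gamma = \tr_\alpha \tr_\beta - \tr_{\alpha \beta^{-1}}
\]
reduces to the two smoothings $\alpha \sqcup \beta$ and $\alpha \beta^{-1}$, each with strictly fewer self-intersections than $\gamma$. Because $p \notin \delta$, each smoothing carries at most $|\gamma \cap \delta|$ transverse crossings with $\delta$; since $|\gamma \cap \delta| = |\alpha \cap \delta| + |\beta \cap \delta|$, minimization gives $i(\delta, \alpha) + i(\delta, \beta) \leq i(\delta, \gamma)$ and $i(\delta, \alpha\beta^{-1}) \leq i(\delta, \gamma)$. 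Inducting on the smoothings and on the subsidiary products arising when expanding $\tr_\alpha \tr_\beta$ in the multicurve basis, every multicurve $\Gamma$ in the expansion of $\tr_\gamma$ satisfies $i(\delta, \Gamma) \leq i(\delta, \gamma)$.

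For the lower bound, I would produce a distinguished multicurve $\Gamma^*$ in the expansion with $c_{\Gamma^*} \neq 0$ and $i(\delta, \Gamma^*) = i(\delta, \gamma)$. The idea is to view the iterated skein expansion as a state sum over \emph{smoothing states}---a choice of one of the two smoothings at each self-intersection of $\gamma$---and to isolate the \emph{$\delta$-preserving state}, in which the chosen smoothing at each crossing does not form a bigon with $\delta$. Minimal position guarantees such a smoothing exists at every crossing, and applying them all yields $\Gamma^*$ with $|\Gamma^* \cap \delta| = |\gamma \cap \delta|$ and no bigons, whence $i(\delta, \Gamma^*) = i(\delta, \gamma)$.

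The main obstacle is the minus sign in the skein relation, which a priori permits different states producing the same multicurve to cancel. I would address this by showing that any state deviating from $\delta$-preserving at some crossing yields a multicurve with \emph{strictly} smaller intersection with $\delta$, so $\Gamma^*$ receives contributions only from the $\delta$-preserving state and $c_{\Gamma^*} = \pm 1$. The hardest step will be the bigon-propagation lemma asserting that a single non-$\delta$-preserving smoothing irreversibly reduces the eventual intersection count: one must control how locally created bigons interact with subsequent smoothings at other crossings, which likely requires an explicit state-sum model analogous to the Kauffman bracket evaluated at the classical limit $A = -1$, with the framing handled by passing to a spin or square-root version.
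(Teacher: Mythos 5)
The paper does not prove this theorem at all: it is quoted as a black box from D.~Thurston's preprint \cite{thurston2008geometric}, so there is no in-paper argument to compare against. Judged on its own merits, your outline does match the known strategy (iterated skein resolution at self-crossings, an easy upper bound from the fact that smoothing cannot increase transverse crossings with $\delta$, and a lower bound from a distinguished resolution state). The upper bound half is essentially sound, though you should note that after one resolution the factors $\alpha,\beta$ of $\tr_\alpha\tr_\beta$ may still cross each other, so ``inducting on the subsidiary products'' really means a double induction on the total self-intersection of the whole diagram, and you must handle resolutions that produce contractible or peripheral components, which contribute $2$ or $\tr_{c_i}$ rather than an essential multicurve (harmless for the inequality, but it has to be said).

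The genuine gap is exactly where you flag it, and it is not a small technicality to be deferred: it is the content of the theorem. Two claims are asserted without argument. First, that there is a coherent choice of smoothing at \emph{every} self-crossing of $\gamma$ so that the resulting multicurve $\Gamma^*$ remains in minimal position with $\delta$; ruling out a bigon at each crossing locally does not rule out a bigon bounded by arcs of the smoothed curve that pass through several different crossings, and it also does not rule out $\Gamma^*$ acquiring a contractible or $\delta$-parallel component. Second, that $c_{\Gamma^*}\neq 0$; you assert that any deviating state yields a multicurve with strictly smaller intersection with $\delta$, but this needs proof, and even granting it you must still show no two distinct maximal states produce isotopic multicurves with opposite signs. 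Appealing to ``a state-sum model analogous to the Kauffman bracket at $A=-1$'' names the right framework but does not supply the argument. As written, the proposal is a correct plan with its central lemma left open, so it cannot yet be accepted as a proof.
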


Hence one gets a $\Mod(\Sigma)$-equivariant embedding $\mathcal{ML}(\Sigma) \rightarrow \Vcal$. 

\begin{definition}
    A valuation $v \in \Vcal$ is called \textit{dominating} if for any $f = \sum_{\Gamma} c_{\Gamma}\tr_{\Gamma} \in A_\Sigma$ essential multicurve decomposition,
        \[
        v\left(\sum_{\Gamma} c_{\Gamma} \tr_{\Gamma}\right) = \max \{ v(\tr_{\Gamma}) : c_{\Gamma} \neq 0 \}.
        \]
    In particular, for any measured lamination $\lambda \in \mathcal{ML}(\Sigma)$, the corresponding valuation $v_\lambda$ is dominating.
\end{definition}

\subsection{Tropical Fuchsian domination}

Let $\lambda$ be a measured lamination on $\Sigma$. Consider its preimage on its universal cover $\widetilde{\Sigma}$. This yields a \textit{dual real tree} denoted by $T_\lambda$, obtained as the metric completion of the tree whose vertices are connected components of the complementary region $\widetilde{\Sigma} \setminus \widetilde{\lambda}$, with the arc length connecting two vertices given by the measure of transverse arcs whose endpoints lie in two components corresponding to the vertices. Note that the constructed real tree has a dense subtree generated by its vertices, as the Bruhat-Tits real tree does. $\pi$ acts on $T_\lambda$ by the holonomy.

Let $\pi$ act on a real tree $T$. The action is called \textit{small (stabilizer)} if for any geodesic segment $I \subseteq T$, its stabilizer is cyclic. The action is called \textit{type-preserving} if any peripheral element of $\pi$ fixes some point in $T$. By its construction the $\pi$-action on $T_\lambda$ is small, minimal and type-preserving.

\begin{theorem}[Skora \cite{skora1996splittings}]
    If a $\pi$-action on a real tree $T$ is small, minimal and type-preserving then there exists $\lambda \in \mathcal{ML}(\Sigma)$ such that $T \simeq T_\lambda$ is isometric and $\pi$-equivariant.
\end{theorem}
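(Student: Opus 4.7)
The plan is to recover $\lambda$ from the $\pi$-action on $T$ by dualizing an equivariant resolution. First I would fix a $\pi$-equivariant triangulation of the universal cover $\widetilde{\Sigma}$ and choose equivariantly images in $T$ for the vertices; since $T$ is a real tree this extends linearly along edges and across $2$-simplices to produce a $\pi$-equivariant continuous map $f : \widetilde{\Sigma} \to T$. The type-preserving hypothesis allows one to arrange $f$ so that each horoball neighborhood of a puncture collapses to the fixed point of the corresponding peripheral element, making $f$ compatible with the cusp structure of $\Sigma$. After subdividing, one may further arrange $f$ to be a \emph{morphism} in the Bestvina-Feighn sense, i.e.\ an isometry on each edge of an equivariant refinement.

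Next I would apply the Rips machine of Bestvina-Feighn (refined by Gaboriau-Levitt-Paulin, Guirardel and others) to decompose the minimal $\pi$-invariant subtree of $T$ into standard axial, Levitt (exotic), and surface pieces. Smallness together with the absence of $\Z^2$ subgroups in $\pi$ rules out the axial pieces, and the genuine crux is to eliminate the Levitt/exotic pieces in the surface setting. This is the content of Skora's original folding argument: one iteratively simplifies $f$ through elementary folds until the resulting morphism exactly resolves a measured foliation on $\Sigma$. Minimality then guarantees this foliation fills $\Sigma$ and has no superfluous components.

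From this measured foliation I would extract the underlying measured geodesic lamination $\lambda \in \mathcal{ML}(\Sigma)$ by collapsing complementary regions, using the fixed hyperbolic structure on $\Sigma$. The dual tree construction $\mu \mapsto T_\mu$ is natural, and the folded morphism factors through the quotient map $\widetilde{\Sigma} \to T_\lambda$ as a $\pi$-equivariant map $T_\lambda \to T$; minimality together with the Rips machine analysis upgrade this to an isometry, yielding the desired identification.

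The principal obstacle is the structural step of eliminating Levitt components under the smallness and type-preservation hypotheses: Skora's inductive folding argument is what resolves this, and type-preservation is precisely what controls the behavior of folds near the punctures. Once the surface type of the action is established, the rest is essentially bookkeeping about transverse measures and dual trees.
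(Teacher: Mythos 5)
The paper states this result as a citation to Skora \cite{skora1996splittings} and does not prove it, so there is no in-paper argument to compare your sketch against; I will therefore evaluate the sketch on its own terms. Your route --- build a $\pi$-equivariant resolution $f\colon\widetilde\Sigma\to T$, collapse horoballs to fixed points of peripheral subgroups using type-preservation, subdivide to a Bestvina--Feighn morphism, invoke the Rips machine classification on the minimal subtree, rule out axial pieces because $\pi$ contains no $\Z^2$, and fold away exotic pieces to land on a surface piece dual to some $\lambda$ --- is a recognizable modern reading of Skora's theorem, and the ingredients you list are the right ones. Note, though, that this packages the argument via the Bestvina--Feighn/Gaboriau--Levitt--Paulin machine, whereas Skora's original 1996 proof is a direct resolution-and-moves argument in the Morgan--Otal style; both are legitimate, but you are describing the later reorganization rather than Skora's own.

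Two points should be made explicit before this counts as a proof outline rather than a pointer to the literature. First, the Rips machine as you invoke it applies to \emph{stable} actions; you need to record that a small action of the finitely presented group $\pi$ on an $\R$-tree is automatically stable before the classification is available. Second, the closing clause ``minimality together with the Rips machine analysis upgrade this to an isometry'' compresses the actual crux: once you have a $\pi$-equivariant morphism $T_\lambda\to T$, you must show it admits no nontrivial folds, and here smallness is used a second time --- a persisting fold produces an arc in $T$ whose stabilizer contains two noncommuting elements, contradicting that arc stabilizers are cyclic. You do flag folding as the principal obstacle, which is honest, but a careful write-up must keep the no-fold argument separate from, not absorbed into, the Rips classification step.
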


If one removes the smallness condition, there is still a morphism between real trees. It may fold at certain points, thereby decreasing distances between some points.

\begin{theorem}[Morgan-Otal \cite{morgan1993relative}]
    If a $\pi$-action on a real tree $T$ is minimal and type-preserving then there exists $\lambda \in \mathcal{ML}(\Sigma)$ and a morphism between real trees $\Phi : T_\lambda \rightarrow T$ which is $1$-Lipschitz and $\pi$-equivariant.
\end{theorem}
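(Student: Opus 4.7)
The plan is to resolve the $\pi$-tree $T$ by the dual tree of a measured lamination, following the classical strategy of realizing $T$ as a quotient of some $T_\lambda$ via an equivariant map from $\widetilde{\Sigma}$. Concretely, I would first choose a $\pi$-equivariant ideal triangulation $\widetilde{\tau}$ of $\widetilde{\Sigma}$ coming from an ideal triangulation $\tau$ of $\Sigma$, pick a single vertex $\widetilde{x}_0$ and map it to an arbitrary basepoint of $T$, then extend $\pi$-equivariantly to all vertices using the $\pi$-action (note that peripheral vertices can be sent to fixed points of the corresponding peripheral elements, using the type-preserving hypothesis). Next I would extend to edges as geodesic segments in $T$, and to each triangle by collapsing onto the tripod (possibly degenerate) spanned by the images of its three vertices, producing a $\pi$-equivariant continuous map $f : \widetilde{\Sigma} \to T$.

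The second step is to extract a measured foliation $\widetilde{\Fcal}$ on $\widetilde{\Sigma}$ from $f$: on each (open) triangle the preimages of points in the tripod form a foliation by arcs, and the pullback of the length measure on $T$ gives a transverse measure. The gluing across edges of $\tau$ is automatic because adjacent triangles share the geodesic image of the common edge, so the transverse measures match. By $\pi$-equivariance the foliation descends to a measured foliation $\Fcal$ on $\Sigma$; the type-preserving condition guarantees that near each puncture the foliation is trivial (peripheral leaves, no transverse measure accumulating at the cusp), so $\Fcal$ is a genuine measured foliation on the finite-area surface.

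Third, I would apply a standard straightening theorem — Levitt's result that every measured foliation with only trivial transverse measures on arcs collapses to a unique measured geodesic lamination — to produce $\lambda \in \mathcal{ML}(\Sigma)$ from $\Fcal$. The dual real tree $T_\lambda$ then comes equipped with a canonical $\pi$-equivariant quotient map $\widetilde{\Sigma} \to T_\lambda$ (its own "collapsing" map), and by the universal property of $T_\lambda$ — it is characterized as the leaf space (in the sense of the pseudo-metric completion) of the lifted foliation — the map $f$ factors as
\[
f : \widetilde{\Sigma} \twoheadrightarrow T_\lambda \xrightarrow{\Phi} T,
\]
with $\Phi$ automatically $\pi$-equivariant. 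The $1$-Lipschitz property of $\Phi$ follows because the transverse measure on $\widetilde{\Fcal}$ was defined as the pullback of the length on $T$, so distances in $T_\lambda$ (which equal the transverse measure of arcs joining the corresponding leaves) dominate the $T$-distance between images.

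The main obstacle is the second step: controlling the local structure of $\widetilde{\Fcal}$ so that it is genuinely a measured foliation (in particular, dealing with the tripod vertex in each triangle, which is the source of the "folding" that prevents $\Phi$ from being an isometry) and verifying that at punctures the transverse measure stays finite on proper arcs — this is precisely where the type-preserving hypothesis is indispensable, since a peripheral element with no fixed point on $T$ would force infinite transverse measure to pile up at the corresponding cusp and the whole descent to $\Sigma$ would fail. Removing the smallness hypothesis, by contrast, is harmless: it simply allows the tripod map on some triangles to degenerate and $\Phi$ to fold, which is exactly why the conclusion weakens from an isometry (Skora) to a $1$-Lipschitz morphism.
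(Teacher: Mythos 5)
The paper does not prove this statement; it is quoted as a black box from Morgan--Otal, so there is no internal proof to compare your sketch against. That said, your outline is the standard \emph{resolution} argument (Morgan--Shalen in the closed case; Morgan--Otal and Gillet--Shalen in the relative case): build a $\pi$-equivariant simplicial map $\widetilde{\Sigma}\to T$ from a triangulation, pull back a measured foliation, pass to the corresponding lamination $\lambda$, and factor the map through the dual tree $T_\lambda$ using the transverse pseudo-metric. That is indeed the right strategy, and your identification of where the type-preserving hypothesis enters (finiteness of transverse measure at the cusps, equivalently $\ell_T(c)=0$ for peripheral $c$) and of where the failure of smallness manifests (folding of the tripod maps) is accurate.

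The one step that needs tightening is your treatment of the vertices. In an ideal triangulation of a punctured $\Sigma$ \emph{every} vertex of $\widetilde{\tau}$ is ideal, hence is the fixed point of some parabolic $c\in\pi$. By $\pi$-equivariance the image of such a vertex is \emph{forced} to lie in $\mathrm{Fix}_T(c)$; it cannot be sent to ``an arbitrary basepoint of $T$''. Your parenthetical is the correct prescription, but it contradicts the sentence it annotates. The fix is: for one representative vertex in each $\pi$-orbit, choose a point of $\mathrm{Fix}_T(c)$ (nonempty precisely by the type-preserving hypothesis), and let equivariance propagate this choice; the map is then defined on the open triangles and their finite edges, which suffices for the pullback foliation. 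Two smaller remarks: (i) the appeal to ``Levitt's straightening'' is informal --- what is actually needed is the Thurston correspondence between measured foliations (after collapsing unfoliated complementary regions and passing to Whitehead equivalence) and elements of $\mathcal{ML}(\Sigma)$, together with the fact that the dual $\mathbb{R}$-trees of corresponding objects are canonically $\pi$-equivariantly isometric; and (ii) the $1$-Lipschitz bound should be phrased as: the $T_\lambda$-distance between two leaves is the infimum of transverse measures of transversal paths, while any such path has transverse measure equal to the total variation of $f$ along it, which is at least $d_T$ of the endpoint images, giving $d_{T_\lambda}\ge d_T\circ(\Phi\times\Phi)$.
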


Now consider the $\pi$-action on a Bruhat-Tits real tree by its tautological representation. There exists a measured lamination $\lambda$ such that for any $\gamma \in \pi$, by $1$-Lipschitzness
\[
2v(\tr_\gamma) \leq \inf_{x \in T}d_T(x, \gamma \cdot x) \leq d_{T_\lambda}(x_0, \gamma \cdot x_0) = i(\lambda, \gamma) = v_\lambda(\tr_\gamma).
\]
Now for any $f = \sum_{\Gamma} c_{\Gamma}\tr_{\Gamma} \in A_\Sigma$, we have
\[
v(f) \leq \max \{ v(\tr_\Gamma) : c_{\Gamma} \neq 0 \} \leq \max \{ v_{\frac 12 \lambda}(\tr_\Gamma) : c_{\Gamma} \neq 0 \} = v_{\frac 12 \lambda}(f).
\]

\begin{corollary}[March\'e-Simon \cite{marche2021automorphisms}]
    For any $v \in \Vcal$, there exists a unique measured lamination $\lambda$ such that $v(f) \leq v_\lambda(f)$ for any $f \in A_\Sigma$ and $v(\tr_\gamma) = v_\lambda(\tr_\gamma)$ for any $\gamma \in \pi$. In particular, every dominating valuation arises from measured laminations and $\mathcal{ML}(\Sigma) \rightarrow \Vcal$ is a closed embedding.
\end{corollary}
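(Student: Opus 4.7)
Given $v \in \Vcal$, the plan is to extract $\lambda$ from the $\pi$-action on the Bruhat-Tits real tree $T$ attached to the tautological representation $\rho_{\textup{taut}}: \pi \to \SL(2, L)$ and an extension $\hat v$ of $v$ to $L$. By the translation-length formula recalled above, $\ell_T(\gamma) = \max\{0, 2v(\tr_\gamma)\}$. First I would pass to the minimal invariant subtree of $T$ (leaving translation lengths unchanged), and verify that the action is type-preserving: each peripheral $c_i$ satisfies $v(\tr_{c_i}) = 0$ since $\tr_{c_i}$ lies in the base ring $\Z[\tr_{c_i}]_{i=1}^{n}$, whence $c_i$ has vanishing translation length and fixes a point. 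Applying the Morgan-Otal theorem then produces a measured lamination $\lambda_0 \in \mathcal{ML}(\Sigma)$ and a $1$-Lipschitz $\pi$-equivariant morphism $\Phi : T_{\lambda_0} \to T$. Setting $\lambda = \tfrac{1}{2}\lambda_0$, the inequality $v(f) \leq v_\lambda(f)$ for every $f \in A_\Sigma$ follows from the multicurve-decomposition calculation displayed just before the corollary.

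I expect the main obstacle to be the equality $v(\tr_\delta) = v_\lambda(\tr_\delta)$ on essential simple closed curves $\delta$, which amounts to showing that $\Phi$ is isometric on the axis of $\delta$ in $T_{\lambda_0}$. If the $\pi$-action on $T$ happens to be small in the sense of the previous subsection, Skora's theorem upgrades $\Phi$ to an equivariant isometry, and equality even holds for every $\gamma \in \pi$. In general the action need not be small, but the folds introduced in the Morgan-Otal construction are localized along subtrees where non-cyclic segment stabilizers force a collapse. Since an essential simple closed curve is realized by an embedded geodesic on $\Sigma$, its axis in $T_{\lambda_0}$ can be chosen to avoid these folded regions, so that $\Phi$ remains isometric along it. A careful inspection of the band decomposition underlying the Morgan-Otal construction should verify this, and this bookkeeping is the most delicate step.

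Uniqueness of $\lambda$ then follows because the intersection-number functional $\delta \mapsto i(\lambda, \delta)$ on essential simple closed curves separates points of $\mathcal{ML}(\Sigma)$. For the closed embedding assertion, I would identify the image of $\mathcal{ML}(\Sigma) \to \Vcal$ with the set of dominating valuations: each $v_\lambda$ is dominating by definition, and conversely, if $v$ is dominating and $v(\tr_\delta) = i(\lambda, \delta)$ for every simple closed $\delta$, then multiplicativity yields $v(\tr_\Gamma) = \sum_j v(\tr_{\gamma_j}) = \sum_j i(\lambda, \gamma_j) = v_\lambda(\tr_\Gamma)$ on every essential multicurve, and dominance extends this to all of $A_\Sigma$. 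Since the dominance condition $v(f) = \max\{v(\tr_\Gamma) : c_\Gamma \neq 0\}$ is pointwise closed in $\Vcal$, the image is closed; inverse continuity reduces to $v_{\lambda_n}(\tr_\delta) = i(\lambda_n, \delta)$ together with the standard fact that the topology of $\mathcal{ML}(\Sigma)$ is determined by intersection numbers with essential simple closed curves.
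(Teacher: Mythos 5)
Your outline matches the paper's approach up to the point where the $1$-Lipschitz Morgan--Otal morphism $\Phi\colon T_{\lambda_0}\to T$ yields the domination $v(f)\le v_{\lambda}(f)$ with $\lambda=\tfrac12\lambda_0$; that part (including the reduction to the minimal subtree and the type-preserving check via $v(\tr_{c_i})=0$) is sound and is exactly the computation displayed before the corollary. The uniqueness step (intersection numbers with essential simple closed curves separate points of $\mathcal{ML}(\Sigma)$) and the identification of the image with the closed set of dominating valuations are also correct, granted the equality on simple curves.

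The genuine gap is the equality statement itself. The corollary asserts $v(\tr_\gamma)=v_\lambda(\tr_\gamma)$ for \emph{every} $\gamma\in\pi$, and the $1$-Lipschitz property of $\Phi$ gives only $\ell_T(\gamma)\le\ell_{T_\lambda}(\gamma)$, i.e.\ the inequality $\le$. Your argument addresses only simple closed curves in the non-small case, so even if it succeeded you would not have the full statement: equality on simple curves does not propagate to general $\gamma$ through the multicurve decomposition, since the ultrametric inequality $v\big(\sum c_\Gamma\tr_\Gamma\big)\le\max v(\tr_\Gamma)$ can be strict when the maximum is attained more than once. Moreover, the heuristic for simple curves --- that the axis of $\delta$ in $T_{\lambda_0}$ ``can be chosen to avoid the folded regions'' because $\delta$ is embedded --- is not a proof: the axis of $\delta$ in the dual tree is determined by $\lambda_0$ and $\delta$, not chosen, and there is no a priori reason it stays clear of where $\Phi$ folds. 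In fact equivariant folding of a surface-group tree can very well change the translation length of individual elements, so this is not a soft localization argument. What actually closes the gap in March\'e--Simon is a sharper form of the Morgan--Otal resolution theorem, namely that the morphism $T_{\lambda_0}\to T$ can be taken to preserve the translation-length function on $\pi$, not merely to be $1$-Lipschitz; this is a substantive input from \cite{morgan1993relative}, not something recoverable by inspection of ``band decompositions'' at the level of detail you offer. As written, the central claim of the corollary is left unproved.
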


\begin{corollary}[March\'e-Simon \cite{marche2021automorphisms}]
    The action of $\Aut^*(X(\Sigma))$ on $\Vcal$ preserves $\mathcal{ML}(\Sigma)$.
\end{corollary}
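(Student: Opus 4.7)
Fix $\varphi \in \Aut^*(X(\Sigma))$ and $\lambda \in \mathcal{ML}(\Sigma)$, and set $w := \varphi \cdot v_\lambda \in \Vcal$. The goal is to produce $\mu \in \mathcal{ML}(\Sigma)$ with $w = v_\mu$, which by the uniqueness in the previous corollary is the same as showing that $w$ is dominating. The first move is to apply tropical Fuchsian domination directly to $w$, which is a valuation because $\varphi$ is a ring automorphism: this yields a unique $\mu \in \mathcal{ML}(\Sigma)$ with $w \leq v_\mu$ pointwise on $A_\Sigma$ and $w(\tr_\gamma) = v_\mu(\tr_\gamma) = i(\mu,\gamma)$ for every $\gamma \in \pi$. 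So $\mu$ is the only lamination that $w$ could represent, and the remaining content is to upgrade the inequality $w \leq v_\mu$ to equality.

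Next, multiplicativity of valuations extends the trace equality from single $\gamma$ to the entire essential multicurve basis: for $\Gamma = \sqcup_i \gamma_i$ one has $w(\tr_\Gamma) = \sum_i w(\tr_{\gamma_i}) = \sum_i v_\mu(\tr_{\gamma_i}) = v_\mu(\tr_\Gamma)$. For any $f = \sum_\Gamma c_\Gamma \tr_\Gamma$, the non-archimedean axiom applied to $w$ gives $w(f) \leq \max_{c_\Gamma \neq 0} w(\tr_\Gamma) = v_\mu(f)$, with equality in the axiom whenever the maximum on the right is attained at a unique $\Gamma$. For $\mu$ sufficiently generic (say $i(\mu,-)$ injective on the finite set of essential multicurves appearing in a given decomposition, a condition satisfied on a dense $G_\delta$ of $\mathcal{ML}(\Sigma)$), this uniqueness is automatic for every $f$, and we conclude $w = v_\mu$ for such a $\lambda$.

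To extend to arbitrary $\lambda$, I would invoke continuity of the $\Aut^*$-action on $\Vcal$ together with the closedness of $\mathcal{ML}(\Sigma) \hookrightarrow \Vcal$ from the previous corollary: the set $\{\lambda : \varphi \cdot v_\lambda \in \mathcal{ML}(\Sigma)\}$ is closed, so it suffices that it contain a dense subset. The main obstacle is exactly this density. The assignment $\lambda \mapsto \mu$ coming from Fuchsian domination is a continuous self-map of $\mathcal{ML}(\Sigma)$, but its surjectivity or openness is essentially the very statement we want; a direct density argument therefore risks circularity. I would try to break this by running the argument symmetrically for $\varphi$ and $\varphi^{-1}$: Fuchsian domination applied to $\varphi^{-1} v_\mu$ produces $\lambda'$ with $v_\lambda \leq \varphi^{-1} v_\mu \leq v_{\lambda'}$, and the uniqueness clause of Fuchsian domination, potentially combined with Skora's theorem applied to the Bruhat-Tits real tree of $w$ (which is type-preserving because $\varphi(\tr_{c_i}) = \tr_{c_i}$ forces $w(\tr_{c_i}) = v_\lambda(\tr_{c_i}) = 0$ and minimal/small by the general structure of $\SL_2$-actions), should pin down $\lambda' = \lambda$ and hence $\varphi^{-1}v_\mu = v_\lambda$, giving $w = v_\mu$.
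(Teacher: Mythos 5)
Your strategy --- apply tropical Fuchsian domination to $w := \varphi \cdot v_\lambda$ to get $\mu$ with $w \leq v_\mu$, then upgrade to equality via genericity and continuity --- is not what the paper does, and the density step you flag is a genuine gap that your patches do not close. The genericity you impose ("$i(\mu,-)$ injective on the multicurves of any decomposition") constrains the \emph{output} $\mu$, not the input $\lambda$, so it cannot be used to exhibit a dense set of good $\lambda$'s without already understanding the map $\lambda \mapsto \mu$, which is the circularity you noticed. The Skora-based fallback does not escape it: Skora's theorem requires the $\pi$-action on the Bruhat--Tits tree of $w$ to have \emph{small} arc stabilizers, and smallness is precisely the hypothesis that separates valuations of the form $v_\mu$ from general elements of $\Vcal$ --- the reason the paper invokes Morgan--Otal (no smallness assumption, at the price of a $1$-Lipschitz fold rather than an isometry) when proving domination is that smallness is \emph{not} automatic for these tree actions. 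So "minimal/small by the general structure of $\SL_2$-actions" is exactly the unsupported assertion. The $\varphi^{-1}$-symmetry also does not deliver: it yields $v_\lambda \leq u := \varphi^{-1}v_\mu \leq v_{\lambda'}$, but only $v_\lambda(\tr_\gamma) \leq u(\tr_\gamma)$ on traces rather than equality, so the uniqueness clause of the domination corollary cannot force $\lambda' = \lambda$.

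The paper's proof is a one-line order-theoretic observation: $\Aut^*(X(\Sigma))$ preserves the set of \emph{untamable} valuations
\[
\{ v \in \Vcal : v(a) \leq w(a) \text{ for all } a \in A_\Sigma \implies w = Cv \text{ for some } C>0 \}.
\]
This set is defined purely in terms of the pointwise partial order on $\Vcal$ and the $\R_{>0}$-scaling action, both manifestly preserved by $\Aut^*(X(\Sigma))$, so its invariance is automatic. By the domination corollary every untamable $v$ equals some $v_\lambda$ (the dominating $v_\lambda$ is a positive multiple of $v$ agreeing with it on traces, hence equal to $v$), and the untamable valuations contain a dense subset of $\mathcal{ML}(\Sigma)$ (e.g.\ the filling uniquely ergodic laminations); closedness of the embedding $\mathcal{ML}(\Sigma)\hookrightarrow\Vcal$ and continuity of the action then finish, exactly along the lines of your final step. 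The structural difference from your plan is that the paper identifies a dense locus of $\mathcal{ML}(\Sigma)$ by an \emph{intrinsically} $\Aut^*$-invariant property, rather than trying to push a dense set of $\lambda$'s forward through the uncontrolled map $\lambda \mapsto \mu$.
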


\begin{proof}
    This is due to the fact that $\Aut_{R\text{-}\mathbf{Alg}}(A)$ preserves \textit{untamable valuations} 
    \[
    \{ v \in \Vcal_R(A) : \text{ if } v(a) \leq w(a) \text{ for any } a \in A \text{ then } w = Cv \text{ for some } C>0 \}.
    \]
\end{proof}

\begin{remark}
    Let $R$ be a characteristic zero domain. One may replace $X(\Sigma)$ by its base change $X(\Sigma)_R$. For $A = A_\Sigma$, if $R$ were a domain then $R \otimes A$ is also a domain \cite{przytycki2019skein}. Denote $\Vcal^\mathrm{dom}_R(A)$ as the subspace of $\Vcal_R(A)$ consisting of dominating valuations. Then there exists a map
    \[
    \Vcal^\mathrm{dom}_\Z(A) \rightarrow \Vcal^\mathrm{dom}_R(R \otimes A)
    \]
of extending valuation $v \mapsto v_R$ by
    \[
    v_R\left( \sum_\Gamma (r_\Gamma \otimes c_\Gamma) \tr_\Gamma \right) := \max \{v(\tr_\Gamma) : r_\Gamma \otimes c_\Gamma \neq 0 \}
    \]
which is a topological embedding. Moreover, for any $w \in \Vcal_R(A)$ one has $w(r \otimes a) = w(1 \otimes a)$ so there exists a natural section $w \mapsto v$ by
\[
v(a) := w(1 \otimes a)
\]
and $\Vcal^\mathrm{dom}(A)$ is homeomorphic to $\Vcal^\mathrm{dom}_R(R \otimes A)$.
\end{remark}

\subsection{Reduction to curve complex}

From now on we abuse $\Sigma = \Sigma_{g,n}$ for a compact connected oriented surface of genus $g$ with $n$ boundaries to facilitate the cohomology notation. The above corollary reduces the action of $\Aut^*(X(\Sigma))$ to the simplicial action on the curve complex. 

\begin{proposition}[March\'e-Simon \cite{marche2021automorphisms}]
Let $\lambda \in \mathcal{ML}(\Sigma)$. The value group of $v_\lambda$ is a subgroup of $\Z$ if and only if $\lambda = \frac{1}{2}\Gamma$, where $\Gamma$ is an essential multicurve represented as trivial classes in $H_{1}(\Sigma; \Z / 2\Z) = H^1(\Sigma, \partial \Sigma ; \Z / 2\Z)$.
\end{proposition}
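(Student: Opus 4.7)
The plan is to prove the two implications separately. For the backward direction, suppose $\lambda = \frac{1}{2}\Gamma$ with $\Gamma$ essential and $[\Gamma] = 0 \in H_1(\Sigma; \Z/2\Z)$. Since $v_\lambda$ is dominating and essential multicurves form a $\Z[\tr_{c_i}]_{i=1}^{n}$-basis of $A_\Sigma$, the image $\img v_\lambda$ is the subgroup of $\R$ generated by $\{i(\lambda, \delta) : \delta \text{ essential simple closed curve}\}$. For any such $\delta$, $i(\lambda, \delta) = \frac{1}{2} i(\Gamma, \delta)$, and the geometric intersection number reduces mod $2$ to the algebraic intersection pairing $[\Gamma] \cdot [\delta]$, which vanishes by assumption. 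Hence $i(\lambda, \delta) \in \Z$ and $\img v_\lambda \subseteq \Z$.

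For the forward direction, assume $\img v_\lambda \subseteq \Z$, equivalently $i(\lambda, \gamma) \in \Z$ for every essential simple closed curve $\gamma$. I would first show that $\lambda$ is supported on a multicurve with weights in $\frac{1}{2}\Z_{>0}$. Fix a pants decomposition $\{P_i\}_{i=1}^{3g-3+n}$ of $\Sigma$ with a dual curve system $\{P_i'\}$ (each $P_i'$ essentially intersecting $P_i$ and disjoint from the other pants curves), and consider the Dehn-Thurston coordinates $(m_i, t_i)$ of $\lambda$ with $m_i = i(\lambda, P_i)$. By hypothesis $m_i \in \Z$, and $i(\lambda, P_i')$ is a piecewise-linear expression involving $|t_i|$ plus an integer combination of the $m_j$'s; the integrality of $i(\lambda, P_i')$ then forces $t_i \in \frac{1}{2}\Z$. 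Thus $\lambda$ has DT coordinates in $\frac{1}{2}\Z$, so $2\lambda$ is an essential integer-weighted multicurve; write $\lambda = \frac{1}{2}\Gamma$.

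It remains to verify $[\Gamma] = 0$ in $H_1(\Sigma; \Z/2\Z)$. For every essential simple closed curve $\delta$, $i(\Gamma, \delta) = 2 i(\lambda, \delta) \in 2\Z$, so $[\Gamma] \cdot [\delta] \equiv 0 \pmod{2}$. Nondegeneracy of the intersection pairing $H_1(\Sigma; \Z/2\Z) \otimes H_1(\Sigma, \partial\Sigma; \Z/2\Z) \to \Z/2\Z$ from Poincar\'e-Lefschetz duality, together with the fact that classes of essential simple closed curves suffice to detect triviality of $[\Gamma] \in H_1(\Sigma; \Z/2\Z)$, then yields $[\Gamma] = 0$.

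The main obstacle is the Dehn-Thurston step of the forward direction: deducing half-integrality of the twist coordinates from integer intersection data requires a careful analysis of the DT intersection formulas, distinguishing cases based on whether each pants curve is separating (which governs whether the twist contributes $|t_i|$ or $2|t_i|$ to intersections with the dual curves). Once half-integer DT coordinates are established, extracting $\Gamma$ and the mod-$2$ homology condition follows formally from the intersection-pairing computations above.
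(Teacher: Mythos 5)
The final homological step of your forward direction fails for surfaces with boundary, which is precisely the case this paper needs. From $i(\lambda,\delta)\in\Z$ you correctly extract $[\Gamma]\cdot[\delta]\equiv 0\pmod 2$ for every essential simple closed curve $\delta$. But the images of closed-curve classes span only the image of $H_1(\Sigma;\Z/2\Z)\to H_1(\Sigma,\partial\Sigma;\Z/2\Z)$, which is a \emph{proper} subspace once $n\geq 2$ (its complement is spanned by arc classes, and arcs are not among the $\delta$'s you may test against). So nondegeneracy of the Lefschetz pairing only tells you that $[\Gamma]$ lies in the radical of the mod-$2$ self-intersection form on $H_1(\Sigma;\Z/2\Z)$, i.e.\ in the span of the boundary classes $[c_i]$ --- equivalently $[\Gamma]=0$ in $H_1(\Sigma,\partial\Sigma;\Z/2\Z)$ --- not that $[\Gamma]=0$ in $H_1(\Sigma;\Z/2\Z)$. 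This cannot be repaired: take $\Sigma=\Sigma_{0,4}$ and $\gamma$ the curve separating $\{c_1,c_2\}$ from $\{c_3,c_4\}$. Then $\lambda=\tfrac12\gamma$ has $i(\lambda,\delta)\in\Z$ for every essential scc $\delta$ (a closed curve crosses a separating curve evenly), yet $[\gamma]=[c_1]+[c_2]\neq 0$ in $H_1(\Sigma_{0,4};\Z/2\Z)$. Thus your sentence ``classes of essential simple closed curves suffice to detect triviality of $[\Gamma]\in H_1(\Sigma;\Z/2\Z)$'' is false when $n\geq 2$, and the conclusion you can legitimately draw is the relative one; the backward direction is unaffected, since it only uses $[\Gamma]\cdot[\delta]=0$, which already holds for $[\Gamma]$ in the radical.

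Separately, the Dehn--Thurston step you flag as the main obstacle really is where the substance lies: you need to show that integrality of $i(\lambda,\gamma)$ over all essential scc's forces $\lambda$ to be a multicurve in the first place (rationality of the DT data), and then to have weights in $\tfrac12\Z$. Your plan of testing against pants curves $P_i$ and suitably chosen duals $P_i'$, $T_{P_i}^{\pm1}(P_i')$ and tracking the piecewise-linear intersection formulas can be made to work, but as written it is an outline rather than a proof, and the separating/nonseparating case split you mention is exactly where the factor of $\tfrac12$ (and hence the homology condition) is created, so it needs to be carried out explicitly.
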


\begin{proposition}\label{krull}
The number of unparallel components of an essential multicurve $\Gamma$ is equal to
\[
\dim A_\Sigma -\dim \Ocal_{\Gamma}
\]
where $\dim$ denotes the Krull dimension and $\mathcal{O}_{\Gamma}:=\{f \in A_\Sigma : v_{\Gamma}(f) \leq 0\}$.
\end{proposition}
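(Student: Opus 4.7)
The plan is to establish matching upper and lower bounds on $\dim \Ocal_\Gamma$ by cutting $\Sigma$ along $\Gamma$. Let $\Sigma'$ denote the possibly disconnected surface obtained from the cut; it has $\chi(\Sigma') = \chi(\Sigma)$ and $n+2k$ boundary components, with each $\gamma_i$ producing a pair $\gamma_i^{+},\gamma_i^{-}$.

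For the upper bound, the inclusions of the components of $\Sigma'$ into $\Sigma$ induce a $\Z$-algebra morphism $\phi\colon A_{\Sigma'} = \bigotimes_j A_{\Sigma'_j} \to A_\Sigma$. Every essential multicurve $\delta$ in $\Sigma$ disjoint from $\Gamma$ decomposes as $\delta = \bigsqcup_{i \in S}\gamma_i \sqcup \delta''$ for some $S \subseteq \{1,\ldots,k\}$ and some essential multicurve $\delta'' \subset \Sigma'$; since $\phi(\tr_{\gamma_i^{+}}) = \tr_{\gamma_i}$, this shows $\phi(A_{\Sigma'}) = \Ocal_\Gamma$. The ideal $I = (\tr_{\gamma_i^{+}} - \tr_{\gamma_i^{-}})_{i=1}^{k}$ is then contained in $\ker\phi$. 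By the Przytycki--Sikora basis theorem applied to $\Sigma'$, $A_{\Sigma'}$ is free over the polynomial boundary subring $R = \Z[\tr_{c_1},\ldots,\tr_{c_n},\tr_{\gamma_1^{+}},\tr_{\gamma_1^{-}},\ldots,\tr_{\gamma_k^{+}},\tr_{\gamma_k^{-}}]$ with basis indexed by essential multicurves in $\Sigma'$; the $k$ generators of $I$ involve $2k$ distinct polynomial variables, hence form a regular sequence in $R$ and, by freeness, in $A_{\Sigma'}$. Therefore $\dim A_{\Sigma'}/I = \dim A_{\Sigma'} - k$, while additivity of $-3\chi$ over connected components gives $\dim A_{\Sigma'} = \dim A_\Sigma$; hence $\dim \Ocal_\Gamma \leq \dim A_\Sigma - k$.

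For the lower bound, extend $\Gamma$ to a pants decomposition $\{\gamma_1,\ldots,\gamma_{3g-3+n}\}$ of $\Sigma$, and for each $l > k$ choose a simple closed curve $\tau_l \subset \Sigma \setminus \Gamma$ intersecting $\gamma_l$ essentially; such $\tau_l$ exists because $\gamma_l$ is essential in the subsurface $\Sigma\setminus\Gamma$. The $n + (3g-3+n) + (3g-3+n-k) = -3\chi(\Sigma) - k$ trace functions $\tr_{c_j}$, $\tr_{\gamma_i}$, and $\tr_{\tau_l}$ (for $l > k$) all lie in $\Ocal_\Gamma$; together with dual traces $\tr_{\tau_1},\ldots,\tr_{\tau_k}$ to the curves in $\Gamma$, they form a Fenchel--Nielsen-type trace system that parameterizes an open subvariety of $X(\Sigma)$ and is therefore algebraically independent. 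Dropping the $k$ unavailable dual traces, the remaining $-3\chi(\Sigma) - k$ elements stay algebraically independent in $\Ocal_\Gamma$, giving $\dim \Ocal_\Gamma \geq \dim A_\Sigma - k$. Combining the two bounds yields the proposition.

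The main obstacle is the rigorous algebraic-independence argument for the Fenchel--Nielsen-type trace system; this is classical but ultimately rests on the dominance of the pants-length-and-twist trace map $X(\Sigma) \to \A^{-3\chi(\Sigma)}$, which can be verified by a Jacobian computation at a Fuchsian representation. A slicker alternative avoiding the lower bound would be to prove $\ker \phi = I$ directly, identifying $\Ocal_\Gamma \cong A_{\Sigma'}/I$ as rings; but this requires careful bookkeeping of skein and Chebyshev expansions of products $\tr_{\gamma_i}^{m}\cdot\tr_{\delta''}$ in the essential multicurve basis of $A_\Sigma$.
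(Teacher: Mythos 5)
The paper does not give a self-contained proof; it observes that after base-changing to $\C$, the argument of March\'e--Simon applies, and records the key structural fact that $\Ocal_\Gamma$ is the image of $A_{\Sigma|_\Gamma}\to A_\Sigma$. Your proof starts from exactly this identification and then sandwiches $\dim\Ocal_\Gamma$ between two bounds; this is in the same spirit, and I believe it is essentially correct, but two points deserve attention.

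First, the word \emph{unparallel} in the statement is not decorative, and your argument silently assumes it. You let $k$ be ``the number of components'' and then invoke $\dim A_{\Sigma'} = \dim A_\Sigma$ via additivity of $-3\chi$ over the components of $\Sigma' = \Sigma|_\Gamma$. That identity requires every component of $\Sigma'$ to have negative Euler characteristic, so that $\dim A_{\Sigma'_j} = -3\chi(\Sigma'_j)$ (after $\otimes\C$). If $\Gamma$ had two parallel components, cutting would produce an annulus, for which $\chi = 0$ but $\dim A_{\text{annulus}} = 1$, and the dimension count breaks; similarly your claim that $\phi(\tr_{\gamma_i^+}) = \tr_{\gamma_i}$ furnishes a trace of an essential curve would degenerate. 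You should open with the reduction: $v_\Gamma$, and hence $\Ocal_\Gamma$, depends only on $i(\Gamma,-)$ and is unchanged under deleting repeated parallel copies, so one may assume the components of $\Gamma$ are pairwise non-parallel, and then (using that $\Gamma$ is essential, $\chi(\Sigma)<0$, and no two components are parallel) every component of $\Sigma'$ has $\chi<0$.

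Second, the lower bound is the genuinely nontrivial half, and you flag it yourself. The assertion that the boundary traces, pants-curve traces, and a partial set of dual traces form an algebraically independent family — equivalently that the Fenchel--Nielsen-type trace map $X(\Sigma)(\C)\to\A^{-3\chi}$ is dominant — is true and classical (it is what makes these ``trace coordinates'' on the character variety), but it is doing all the work, so it needs either a citation or the Jacobian computation you gesture at. Also, to make both halves clean, follow the paper's cue and base-change to $\C$ (or any field) before doing dimension theory: over $\Z$ the identities $\dim A_{\Sigma'} = \dim A_\Sigma$ and $\dim A_{\Sigma'}/I = \dim A_{\Sigma'}-k$ involve the extra arithmetic dimension and the behaviour of tensor products over $\Z$, and while it all comes out correctly, working over a field lets you identify Krull dimension with transcendence degree throughout, which is what both halves of your sandwich implicitly use.
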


Observe
\[
\dim A_\Sigma - \dim \Ocal_\Gamma = \dim \C[X(\Sigma)] - \dim \C \otimes \Ocal_\Gamma
\]
and the proof of March\'e-Simon applies.

For a surface $\Sigma = \Sigma_1 \sqcup \cdots  \sqcup \Sigma_s$ with connected components $\Sigma_i$, consider $A_\Sigma = \bigotimes_{i=1}^s A_{\Sigma_i}$, which is also integral. Then $\Ocal_\Gamma$ is the image of the ring homomorphism $A_{\Sigma |_\Gamma} \rightarrow A_\Sigma$ induced by the inclusion, where $\Sigma |_\Gamma$ denotes the surface obtained by cutting $\Sigma$ along $\Gamma$.

\begin{proposition}\label{DimIntersection}
Let $\gamma, \delta$ be non-parallel essential simple closed curves. Then
\[
\dim \mathcal{O}_{\gamma} \cap \Ocal_{\delta} \leq \dim A_\Sigma - 2
\]
where the equality holds if and only if $i(\gamma,\delta)=0$ or $i(\gamma, \delta) = 1$ and $\Sigma = \Sigma_{1,1}$.
\end{proposition}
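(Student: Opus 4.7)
The plan is to describe $\Ocal_\gamma \cap \Ocal_\delta$ concretely via the essential multicurve basis and reduce the dimension count to Proposition~\ref{krull}. Expanding $f \in A_\Sigma$ as $f = \sum_\Gamma c_\Gamma \tr_\Gamma$ in the essential multicurve basis over $B = \Z[\tr_{c_i}]_{i=1}^n$, the formula $v_\gamma(f) = \max\{i(\gamma,\Gamma) : c_\Gamma \neq 0\}$ together with $v_\gamma(\tr_\Gamma) = i(\gamma,\Gamma) \geq 0$ shows that $f \in \Ocal_\gamma$ if and only if every essential multicurve $\Gamma$ with $c_\Gamma \neq 0$ is disjoint from $\gamma$. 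Intersecting this condition for $\gamma$ and $\delta$, one obtains that $\Ocal_\gamma \cap \Ocal_\delta$ is the $B$-span of $\tr_\Gamma$ for essential multicurves $\Gamma$ disjoint from the $1$-complex $\gamma \cup \delta \subset \Sigma$. By the remark preceding the proposition, this subring equals the image of the natural ring homomorphism $A_{\Sigma|_{\gamma \cup \delta}} \to A_\Sigma$, where $\Sigma|_{\gamma \cup \delta}$ denotes $\Sigma$ cut along the $1$-complex $\gamma \cup \delta$.

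If $i(\gamma, \delta) = 0$, then $\gamma \sqcup \delta$ is a $2$-component essential multicurve and $\Ocal_\gamma \cap \Ocal_\delta = \Ocal_{\gamma \sqcup \delta}$; Proposition~\ref{krull} then directly yields $\dim \Ocal_\gamma \cap \Ocal_\delta = \dim A_\Sigma - 2$, realizing equality.

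Suppose now $i(\gamma, \delta) = k \geq 1$. The cut surface $\Sigma|_{\gamma \cup \delta}$ has Euler characteristic $\chi(\Sigma) + k$, since $\gamma \cup \delta$ viewed as a graph has $k$ vertices and $2k$ edges and hence Euler characteristic $-k$. Its new boundary components are closed curves formed by alternating $\gamma$- and $\delta$-arcs, with each of the $k$ arcs of $\gamma$ and $k$ arcs of $\delta$ appearing doubled as two distinct boundary arcs in $\Sigma|_{\gamma \cup \delta}$. The strategy is to apply Proposition~\ref{krull} componentwise to the multicurve of these new boundary circles, and then account for the identifications of doubled arc-traces under the map $A_{\Sigma|_{\gamma \cup \delta}} \to A_\Sigma$. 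Combined with the observation that every non-annular, non-disk component of $\Sigma|_{\gamma \cup \delta}$ contributes strictly to the dimension drop, this yields $\dim \Ocal_\gamma \cap \Ocal_\delta \leq \dim A_\Sigma - 2$.

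For the equality characterization one traces through when the above inequalities are simultaneously tight: in the intersecting case, the arithmetic forces every component of $\Sigma|_{\gamma \cup \delta}$ to contain no non-peripheral essential simple closed curve of $\Sigma$, and the dimension of $B$ must match $\dim A_\Sigma - 2$ exactly. The main obstacle is the careful bookkeeping in the intersecting case: one must show that each intersection point of $\gamma$ and $\delta$ simultaneously raises $\chi(\Sigma|_{\gamma \cup \delta})$ by $1$ and generates enough boundary identifications under the map to $A_\Sigma$ to enforce a strict inequality, except in the exceptional low-complexity situation $\Sigma = \Sigma_{1,1}$ with $i(\gamma, \delta) = 1$ where $\Sigma|_{\gamma \cup \delta}$ degenerates to a punctured disk and $\Ocal_\gamma \cap \Ocal_\delta = B$ has dimension exactly $\dim A_\Sigma - 2 = 1$.
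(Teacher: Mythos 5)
Your opening reduction is correct and matches the paper's: you identify $\Ocal_\gamma\cap\Ocal_\delta$ with the image of $A_{\Sigma|_{\gamma\cup\delta}}\to A_\Sigma$ (equivalently, after base change, of $\C[X(\Sigma|_{\gamma\cup\delta})]\to\C[X(\Sigma)]$), and you handle $i(\gamma,\delta)=0$ via Proposition~\ref{krull} exactly as the paper does. The Euler characteristic count $\chi(\Sigma|_{\gamma\cup\delta})=\chi(\Sigma)+k$ is also right.

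The gap is in the case $i(\gamma,\delta)=k\geq 1$, which is where all the work lies. You state a ``strategy'' --- apply Proposition~\ref{krull} componentwise to the new boundary circles and ``account for the identifications of doubled arc-traces'' --- but this is not carried out, and it cannot be carried out as stated: Proposition~\ref{krull} computes $\dim A_\Sigma-\dim\Ocal_\Gamma$ for $\Gamma$ an essential multicurve in $\Sigma$, whereas $\gamma\cup\delta$ is a $1$-complex with vertices (not a multicurve), and the boundary circles of $\Sigma|_{\gamma\cup\delta}$ are not curves in $\Sigma$. There is no version of Proposition~\ref{krull} that applies here without further argument. The subsequent claim that ``every non-annular, non-disk component contributes strictly to the dimension drop'' is precisely the thing that needs to be proved, and the genuine difficulty is controlling the annular and disk components that can and do appear. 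Likewise, the equality characterization is asserted (``the arithmetic forces\dots'') rather than derived.

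The paper closes this gap by an entirely explicit inductive cut: cut $\Sigma$ along $\gamma$ first, then slice along the $k$ arcs of $\delta$ one at a time, classifying each cut as a pants cut, a glued pants cut, or a boundary merging, recording how $(g,b)$ and hence $\chi$ change, and carefully characterizing exactly when an annulus or polygon piece is produced (e.g.\ a polygon requires a preceding annulus, a boundary merging yields an annulus only from a pants component, etc.). This case analysis is what forces $k=1$ and $\Sigma=\Sigma_{1,1}$ in the equality case. Your proposal does not contain a substitute for this analysis, so the proof is incomplete at the step that actually establishes the inequality and the characterization of equality.
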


\begin{proof}
We may change the base to $\C$. Observe $\Ocal_\gamma \cap \Ocal_\delta$ corresponds to the image of $\C[X(\Sigma |_{(\gamma \cup \delta)})] \rightarrow \C[X(\Sigma)]$. Thus
\[
\dim \Ocal_\gamma \cap \Ocal_\delta \leq \dim \C[X(\Sigma |_{(\gamma \cup \delta)})]
\]
and it suffices to compute $\dim \C[X(\Sigma |_{(\gamma \cup \delta)})]$.

If $i(\gamma, \delta)=0$, the equality holds by the previous proposition.

If $i(\gamma, \delta)>0$, take the representatives of $\gamma, \delta$ with minimal intersection.
We will consider $\delta$ as arcs in the cut surface $\Sigma |_\gamma$. $\delta$ is divided into $i(\delta, \gamma)$ pieces of arcs connecting the boundary components. By cutting along each arc segment, the pair $(g, b)$ changes where $g$ denotes the genus and $b$ denotes the number of boundary components of the cut surface. 

By cutting along each arc, there are three possibilities. If the arc is separating, $(g, b) \rightarrow (g, b+1)$. This process will be denoted by \textit{pants cut}, a half cut of a (local) pants. If the arc is nonseparating, $(g, b) \rightarrow (g-1, b+1)$ or $(g, b) \rightarrow (g, b-1)$. These will be denoted by \textit{glued pants cut} and \textit{boundary merging}.

\begin{figure}[h]
    \centering
    \includegraphics[width=0.3\textwidth]{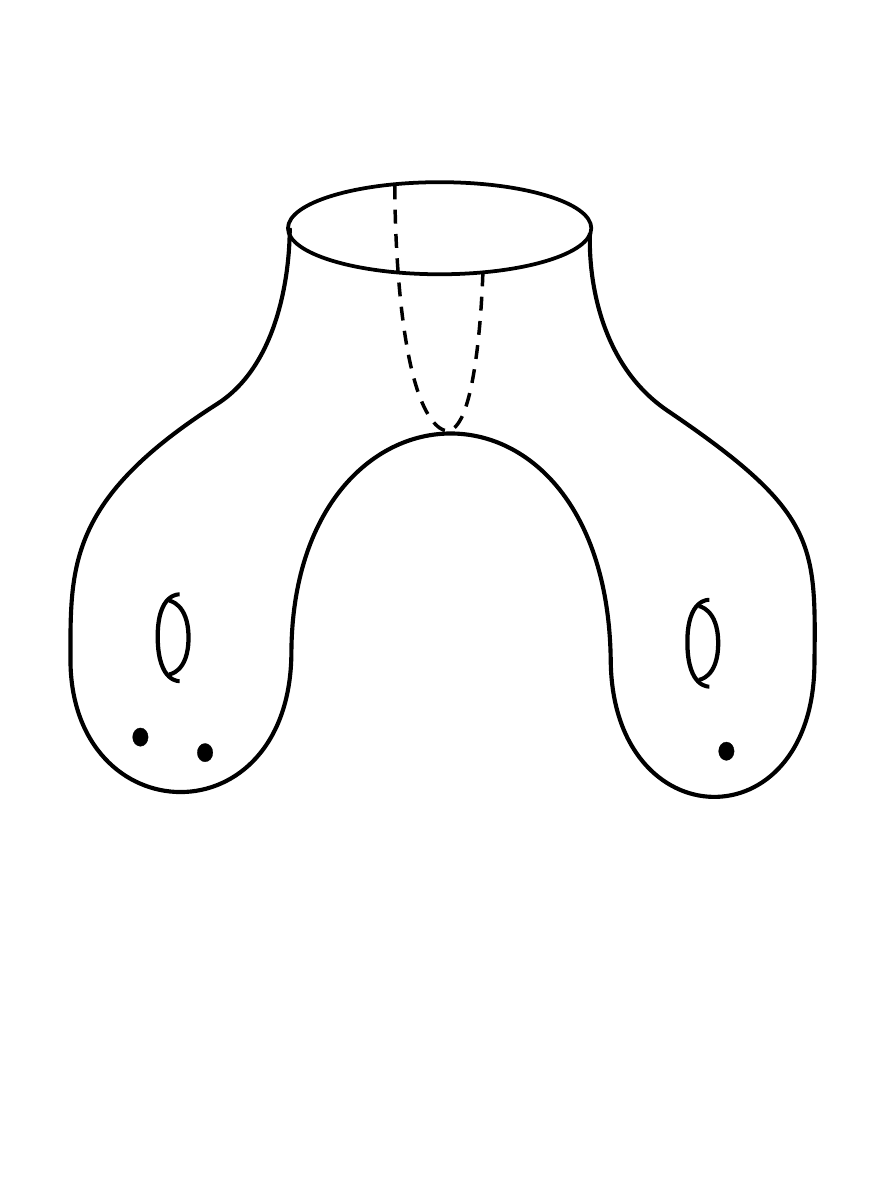}
    \includegraphics[width=0.3\textwidth]{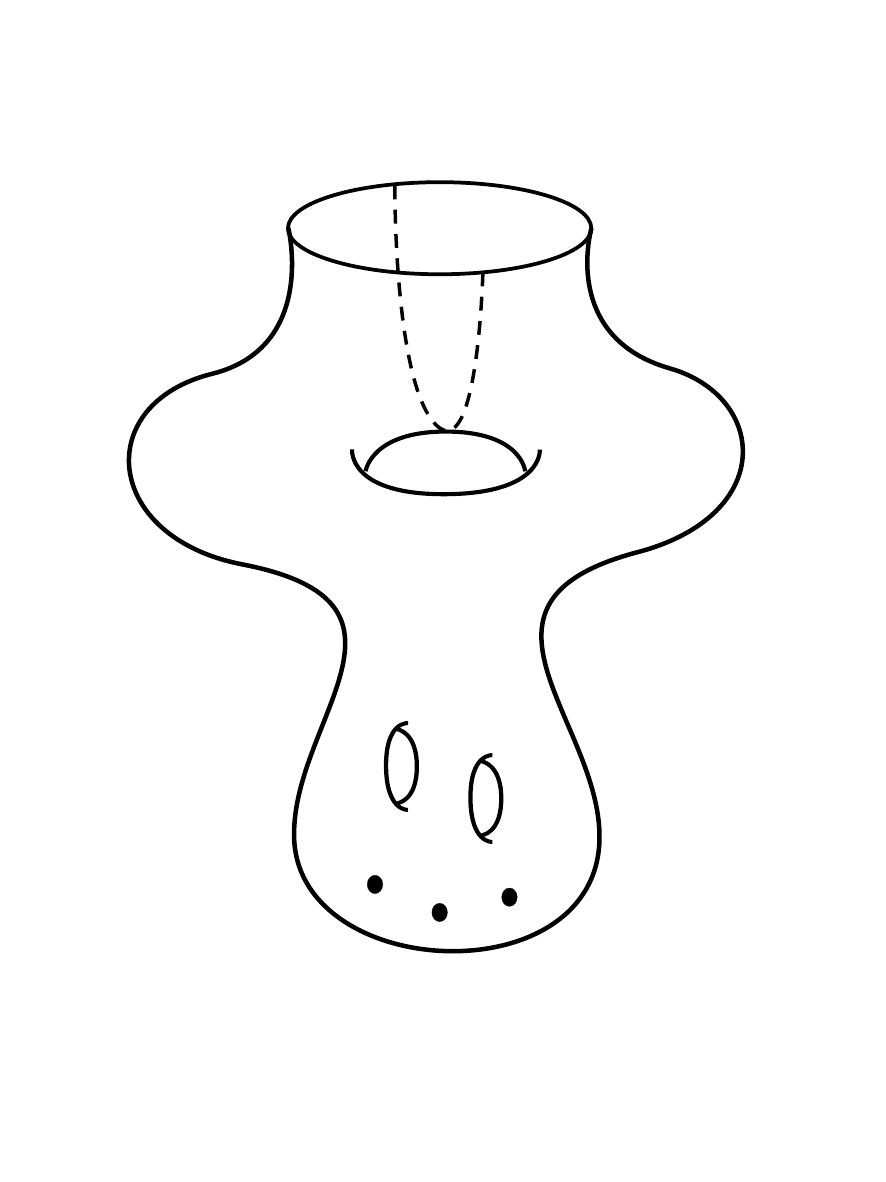}
    \includegraphics[width=0.3\textwidth]{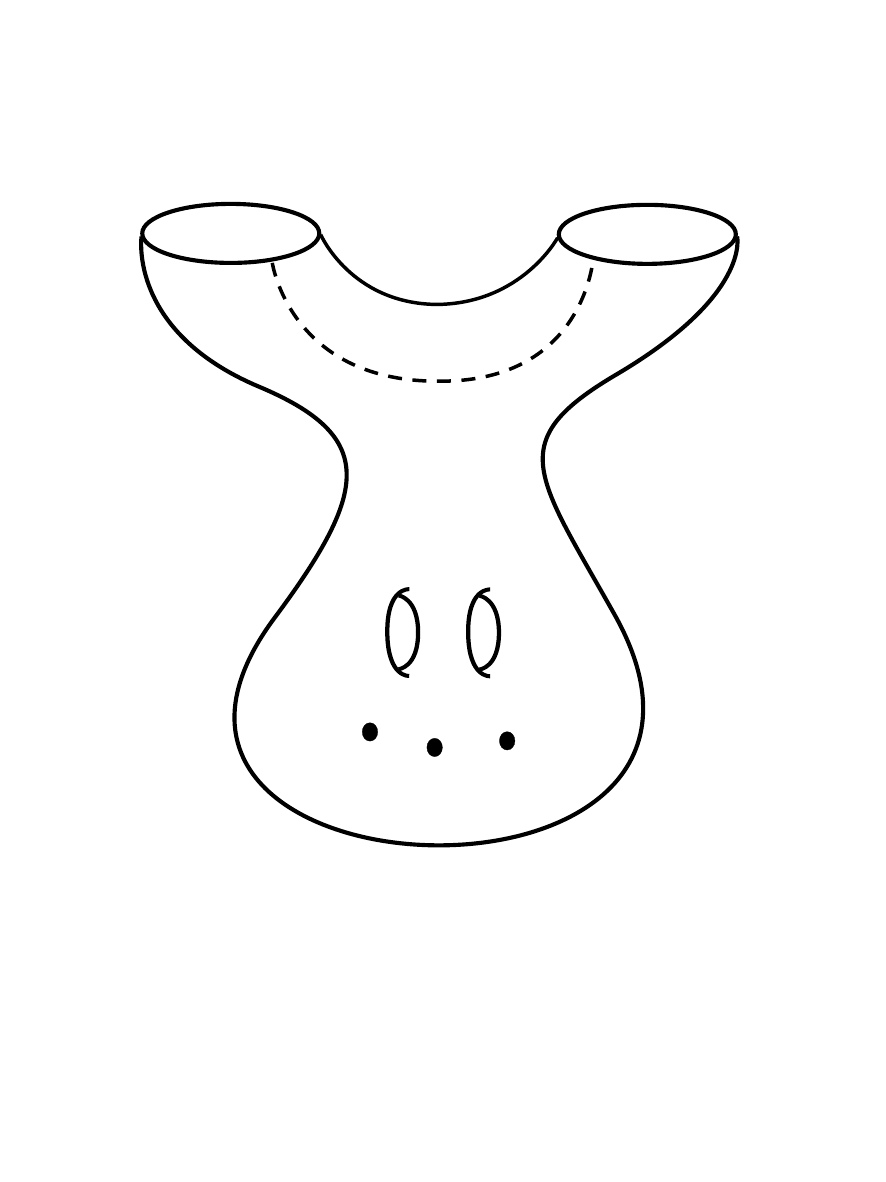}
    \caption{(i) Pants cut. (ii) Glued pants cut. (iii) Boundary merging. }
\end{figure}

In any case the Euler characteristic increases by 1. If the cut pieces do not contain any annuli or polygons, then the dimension decreases by $3$. 

Note that polygons are yielded only by boundary merging, and for this one needs a precedent annulus. Thus if one has a cut polygon then the dimension drops at least $3$.

A pants cut yields an annulus if and only if one of the leg is indeed a leg of pants, which is not local. Similarly, a glued pants cut yields an annulus if and only if it is a cut of a once-punctured torus. Finally, a boundary merging yields an annulus if and only if the component is a pants.

One concludes that if $\dim \Ocal_\gamma \cap \Ocal_\delta = \dim \C[X(\Sigma)] - 2$ then $i(\gamma, \delta) = 1$ and an annulus is yielded by a boundary merging. Thus $\Sigma = \Sigma_{1, 1}$.
\end{proof}

\begin{corollary}
    $\Aut^*(X(\Sigma))$ permutes the valuations along nonseparating simple closed curves and the valuations along $\frac 12$-weighted separating simple closed curves. In particular, it induces an action on $\Ccal(\Sigma)$.
\end{corollary}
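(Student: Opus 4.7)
The plan is to isolate the claimed set by algebraic invariants of valuations preserved by $\Aut^*(X(\Sigma))$, and then to promote the resulting permutation of curves to a simplicial action on $\Ccal(\Sigma)$ using Proposition \ref{DimIntersection}.

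Since $\Aut^*(X(\Sigma))$ acts on valuations by $(\varphi \cdot v)(f) = v(\varphi^{-1}f)$, both the image $\img v$ and the Krull dimension of the valuation ring $\Ocal_v$ are invariants of the orbit. First I would examine the value group of $v_{r\gamma} = r\,v_\gamma$ for a single-curve lamination: it is generated by the intersections $r \cdot i(\gamma,\delta)$ over essential simple closed $\delta$, hence equals $r\Z$ when $\gamma$ is nonseparating (the gcd of intersections is $1$) and $2r\Z$ when $\gamma$ is separating (all values are even and $2$ is realized). Imposing $\img v_\lambda = \Z$ therefore forces the weight $r = 1$ in the nonseparating case and $r = \frac{1}{2}$ in the separating case, removing the scaling ambiguity left by the previous proposition's condition $\img v_\lambda \subseteq \Z$. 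Proposition \ref{krull} separately isolates single-curve laminations from multi-component ones via $\dim A_\Sigma - \dim \Ocal_v = 1$. Together these two algebraic conditions cut out exactly $\{v_\gamma : \gamma \text{ nonseparating}\} \cup \{v_{\gamma/2} : \gamma \text{ separating}\}$, which is thereby $\Aut^*(X(\Sigma))$-invariant.

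This yields a permutation of isotopy classes of essential simple closed curves, and it remains to check simpliciality. Since $\varphi(\Ocal_v) = \Ocal_{\varphi \cdot v}$, the Krull dimension of any intersection $\Ocal_\gamma \cap \Ocal_\delta$ is preserved by the action. Proposition \ref{DimIntersection} identifies disjointness of $\gamma$ and $\delta$ (with the exceptional case $i(\gamma,\delta)=1$ on $\Sigma_{1,1}$, which corresponds to the Farey-graph adjacency) with the equality $\dim \Ocal_\gamma \cap \Ocal_\delta = \dim A_\Sigma - 2$, so the induced permutation preserves adjacency and extends to a simplicial automorphism of $\Ccal(\Sigma)$. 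I do not anticipate a genuine obstacle: the hard work is already in the earlier corollaries and propositions, and the only delicate point is noticing that the $\Z/2\Z$-triviality appearing there, together with the sharper requirement $\img v_\lambda = \Z$, precisely selects the $\frac{1}{2}$-weight normalization on separating curves.
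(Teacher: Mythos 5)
Your proof follows the same route as the paper's: cut out the curve valuations by the $\Aut^*$-invariant conditions (value group equal to $\Z$, after invoking the $\Z/2$-triviality proposition, together with Krull codimension $1$ via Proposition \ref{krull}), and then use Proposition \ref{DimIntersection} to promote the vertex permutation to a simplicial automorphism of $\Ccal(\Sigma)$. You spell out the value-group computation for single-curve laminations in a bit more detail than the paper does, but this is a filled-in step rather than a different method.
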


\begin{proof}
    The value group of a valuation $v$ is isomorphic to $\Z$ if and only if $v = v_\lambda$ and $\lambda$ is a weighted multicurve consisting of $1$-weighted nonseparating simple closed curves and $\frac 12$-weighted separating simple closed curves.

    By Proposition \ref{krull}, $\Aut^*(X(\Sigma))$ induces an action on the $0$-skeleton of $\Ccal(\Sigma)$. For $3g + n < 5$, note $\Ccal(\Sigma)$ is a $0$-dimensional simplicial complex.

    By Proposition \ref{DimIntersection}, $\Aut^*(X(\Sigma))$ preserves the Krull dimension of the subalgebra $\Ocal_\gamma \cap \Ocal_\delta$. If $3g + n \geq 5$, the action on $0$-skeleton preserves tuples of vertices corresponding to a simplex of $\Ccal(\Sigma)$. Thus it induces an action on $\Ccal(\Sigma)$.
\end{proof}

\begin{corollary}
Let $3g + n \geq 5.$ If $(g,n) \neq (1,2), (2,0)$, there is a surjective group homomorphism 
\[
\Aut^*(X(\Sigma)) \twoheadrightarrow \Mod(\Sigma)
\] 
which is a section of \ref{mod to aut}. 
For $(g, n) = (2, 0)$, the result holds if one replaces $\Mod(\Sigma)$ by the quotient by its center.
\end{corollary}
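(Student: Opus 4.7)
The plan is to combine the preceding corollary with Luo's rigidity theorem for the curve complex. The preceding corollary already provides a canonical group homomorphism
\[
\Phi \colon \Aut^*(X(\Sigma)) \longrightarrow \Aut(\Ccal(\Sigma))
\]
induced by the permutation of the rank-one valuations $v_\gamma$ (respectively $v_{\gamma/2}$ for separating $\gamma$) attached to essential simple closed curves. My first step is to check, essentially tautologically, that the composition
\[
\Mod(\Sigma) \xrightarrow{(\ref{mod to aut})} \Aut^*(X(\Sigma)) \xrightarrow{\Phi} \Aut(\Ccal(\Sigma))
\]
recovers the natural Luo map: any $f \in \Mod(\Sigma)$ acts on $A_\Sigma$ by $\tr_\gamma \mapsto \tr_{f(\gamma)}$, hence on $\Vcal$ by $v_\gamma \mapsto v_{f(\gamma)}$, which is exactly the geometric action on $\Ccal(\Sigma)$.

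Under the hypotheses $3g + n \geq 5$ and $(g, n) \neq (1, 2), (2, 0)$, Luo's theorem supplies an isomorphism $L \colon \Mod(\Sigma) \xrightarrow{\sim} \Aut(\Ccal(\Sigma))$. Setting $\rho := L^{-1} \circ \Phi$, the commutative triangle from the previous paragraph yields $\rho \circ (\ref{mod to aut}) = L^{-1} \circ L = \id_{\Mod(\Sigma)}$, so $\rho$ is the desired section of (\ref{mod to aut}) and in particular is surjective.

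For $(g, n) = (2, 0)$, Luo's theorem instead gives a surjection with kernel $\langle h \rangle$, where $h$ is the hyperelliptic involution and coincides with $Z(\Mod(\Sigma_{2,0}))$. Since traces are invariant under inversion, $h$ acts trivially on $A_{\Sigma_{2,0}}$, so (\ref{mod to aut}) factors through an injection $\Mod(\Sigma_{2,0})/Z \hookrightarrow \Aut^*(X(\Sigma_{2,0}))$; re-running the preceding argument with $\Mod(\Sigma_{2,0})/Z$ in place of $\Mod(\Sigma)$ gives the claim. I do not expect any substantial obstacle beyond invoking Luo's theorem, since the preceding corollary has already reduced the problem to a question on the curve complex; the only sub-step requiring care is the identification of $\Phi \circ (\ref{mod to aut})$ with the Luo map, which is immediate from the definitions of $v_\gamma$ and of the $\Mod(\Sigma)$-action on $A_\Sigma$.
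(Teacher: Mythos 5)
Your proof is correct and follows the route the paper intends: use the preceding corollary to obtain $\Phi \colon \Aut^*(X(\Sigma)) \rightarrow \Aut(\Ccal(\Sigma))$, verify that $\Phi$ composed with (\ref{mod to aut}) is the natural Luo map (which is immediate from $f \cdot v_\gamma = v_{f(\gamma)}$), and invert via Luo's rigidity theorem. One small note on the $(2,0)$ case: the cleanest justification that the hyperelliptic involution $h$ acts trivially on $A_{\Sigma_{2,0}}$ is that (\ref{mod to aut}) is by construction the composite $\Mod(\Sigma) \to \Aut(\Ccal(\Sigma)) \hookrightarrow \Aut^*(X(\Sigma))$, and $h$ lies in the kernel of the first arrow by Luo's theorem; invoking ``traces are invariant under inversion'' implicitly uses the additional geometric fact that $h$ carries each $\gamma \in \pi_1$ to a conjugate of $\gamma^{\pm 1}$, which is true but unnecessary here since the factorization through the curve complex already gives triviality on the multicurve basis.
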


\subsection{Central representations}

Let $\Sigma = \Sigma_{g, n}$ where $3g +n \geq 5$ and $(g, n) \neq (1, 2)$. 

Let $Z$ be the center of $\SL(2,R)$ over a domain $R$. The central representations $H^1(\Sigma, Z) = \Hom(\pi , Z)$ naturally act on the representation variety $\Rep(\pi, \SL_2)$ over $R$, given by considering
\[
\Rep(\pi, \SL_2) \rightarrow \Rep(\pi, \PSL_2)
\]
as a $H^1(\Sigma, Z)$-torsor. Since the representation is central, the action descends to the character variety.

Note that if $\char R = 2$ then $Z$ is trivial and $\char R \neq 2$ then $Z = \Z/2\Z$. If $Z$ is trivial then the following parts of the section are vacuously true and one may assume $Z = \Z / 2\Z$.

The action on $A_\Sigma$ is given by sign changes : $\tr_\gamma \leftrightarrow \chi(\gamma)\tr_\gamma$ for certain curves $\gamma \in \pi$ where $\chi \in H^1(\Sigma, \partial \Sigma ; \Z / 2\Z)$. Note that the cohomology $H^1(\Sigma, \partial \Sigma ; \Z / 2\Z) \leq H^1(\Sigma , \Z / 2\Z)$ fixes the monodromies along each puncture. 

If $R = \C$, by the exceptional isomorphism
\[
\begin{tikzcd}
{\mathrm{Spin}(3,1)} \arrow[d] \arrow[r, "\sim"] & {\SL(2, \C)} \arrow[d] \\
{\SO(3,1)^\circ} \arrow[r, "\sim"']                & {\PSL(2, \C)}         
\end{tikzcd}
\]
the action of central representations corresponds to the change of the spin structure of $\Sigma$.

Let $K$ be the kernel of $\Aut^*(X(\Sigma)) \twoheadrightarrow \Mod(\Sigma)$ or the replaced morphism in Corollary \ref{main} where $(g, n) = (2, 0)$. For $\varphi \in K$, for any simple closed curves $\gamma, \delta$ one has
\[
v_\delta(\varphi(\tr_\gamma)) = v_\delta(\tr_\gamma) = i(\gamma, \delta).
\]
Write the essential multicurve basis expansion of $\varphi(\tr_\gamma) = \sum_\Gamma c_\Gamma \tr_\Gamma$. The simplicity of $\gamma$ implies that if $i(\gamma, \delta) = 0$ and $c_\Gamma \neq 0$ then $i(\delta, \Gamma) = 0$. For any essential multicurve $\Gamma$, except in the case that $\Gamma$ consists only of parallel copies of $\gamma$, one can find $\delta$ such that $i(\delta, \Gamma) > 0$ and $i(\delta, \gamma) = 0$. Thus $\varphi $ induces an automorphism of the subalgebra generated by $\tr_\gamma$ and one can then write
\[
\varphi(\tr_\gamma) = a_\gamma \tr_\gamma + b_\gamma
\]
for $a_\gamma, b_\gamma \in \Z[\tr_{c_i}]_{i=1}^n$. Note that $\Z[\tr_{c_i}]_{i=1}^n$ is a subdomain of $A_\Sigma$.

\begin{proposition}[March\'e-Simon \cite{marche2021automorphisms}]
Any $\varphi \in K$ is induced by a central representation.
\end{proposition}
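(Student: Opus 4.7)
The plan is to show that every $\varphi \in K$ acts on $A_\Sigma$ as $\tr_\gamma \mapsto \epsilon(\gamma)\tr_\gamma$ for some $\epsilon \in \Hom(\pi, \{\pm 1\})$ with $\epsilon(c_i) = 1$ for every peripheral loop. First I would sharpen $a_\gamma \in \Z[\tr_{c_i}]_{i=1}^n$ to $a_\gamma \in \{\pm 1\}$. Since $\varphi^{-1} \in K$, write $\varphi^{-1}(\tr_\gamma) = a'_\gamma\tr_\gamma + b'_\gamma$; applying $\varphi$, which fixes $\Z[\tr_{c_i}]_{i=1}^n$ pointwise, and using $\varphi \circ \varphi^{-1} = \id$ yields $a_\gamma a'_\gamma = 1$, so $a_\gamma$ is a unit in the polynomial ring $\Z[\tr_{c_i}]_{i=1}^n$ and hence lies in $\{\pm 1\}$.

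Next, for every nonseparating simple closed curve $\gamma$, I would choose a nonseparating $\delta$ with $i(\gamma,\delta) = 1$ (available by the complexity assumption once $g \geq 1$) and apply $\varphi$ to the Fricke relation
\[
\tr_\gamma\tr_\delta = \tr_{\gamma\delta} + \tr_{\gamma\delta^{-1}},
\]
in which $\gamma\delta$ and $\gamma\delta^{-1}$ are again simple closed curves. Matching the coefficients of $\tr_\gamma$ and $\tr_\delta$ in the essential multicurve basis over $\Z[\tr_{c_i}]_{i=1}^n$ forces $b_\gamma = b_\delta = 0$, while matching the coefficients of $\tr_{\gamma\delta}$ and $\tr_{\gamma\delta^{-1}}$ yields the multiplicative identity $a_{\gamma\delta} = a_{\gamma\delta^{-1}} = a_\gamma a_\delta$.

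Then I would construct $\epsilon$: using that nonseparating simple closed curves together with the peripheral loops $c_i$ generate $\pi$, I define $\epsilon$ on the nonseparating generators by $\epsilon(\gamma) = a_\gamma$, set $\epsilon(c_i) = 1$, and extend as a homomorphism, the defining relation $\prod[a_i, b_i]\prod c_j = 1$ being automatically respected since $\{\pm 1\}$ is abelian. The main obstacle is verifying that $\gamma \mapsto a_\gamma$ on \emph{every} nonseparating simple closed curve factors through mod 2 homology, so that this $\epsilon$ recovers $a_\bullet$ globally; I would address this by propagating the identity $a_{\alpha\beta} = a_\alpha a_\beta$ along transverse-pair edges of the nonseparating curve graph, whose connectedness together with the change-of-coordinates principle ensures consistency.

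Finally, let $\rho_\epsilon$ denote the central representation action of $\epsilon$ and consider $\psi = \varphi \circ \rho_\epsilon^{-1} \in K$. By construction $\psi$ fixes $\tr_\gamma$ for every nonseparating simple closed curve. Since the traces of separating simple closed curves are expressible as polynomials in traces of nonseparating ones via iterated Fricke identities on bounding subsurfaces, $A_\Sigma$ is generated over $\Z[\tr_{c_i}]_{i=1}^n$ by traces of nonseparating simple closed curves, whence $\psi = \id$ and $\varphi = \rho_\epsilon$. The requirement $\varphi(\tr_{c_i}) = \tr_{c_i}$ is consistent with $\epsilon(c_i) = 1$, placing $\epsilon$ in $H^1(\Sigma, \partial\Sigma; \Z/2\Z)$.
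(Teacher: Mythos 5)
Your strategy is a genuinely different route from the paper's: the paper defers the hard part (that $b_\gamma=0$, $a_\gamma=\pm1$, and that $\gamma\mapsto a_\gamma$ on simple curves comes from a central representation) to March\'e--Simon's original argument and then only supplies the extension from simple to arbitrary $\alpha\in\pi$ via the rectangle-smoothing observation that all multicurves appearing in the essential decomposition of $\tr_\alpha$ lie in one $\Z/2$-homology class. You instead try to reprove the simple-curve case directly. The first step --- $a_\gamma a'_\gamma = 1$ in $\Z[\tr_{c_i}]_{i=1}^n$, hence $a_\gamma \in \{\pm1\}$ --- is clean and correct, and the Fricke-relation coefficient comparison at an intersection-one pair does force $b_\gamma=b_\delta=0$ and $a_{\gamma\delta}=a_{\gamma\delta^{-1}}=a_\gamma a_\delta$ once one notes that $\gamma\delta$, $\gamma\delta^{-1}$ are simple and nonseparating and hence appear as distinct basis elements.

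However there is a real gap: your entire mechanism requires $g\ge1$, and you say so, but the proposition inherits the standing hypotheses $3g+n\ge5$, $(g,n)\ne(1,2)$, which include $\Sigma_{0,n}$ with $n\ge5$. In genus $0$ every essential simple closed curve separates and every pair of simple closed curves has even geometric intersection number, so there is no intersection-one Fricke relation $\tr_\gamma\tr_\delta=\tr_{\gamma\delta}+\tr_{\gamma\delta^{-1}}$ with both $\gamma\delta$, $\gamma\delta^{-1}$ simple, and there are no nonseparating curves to build $\epsilon$ from. In that regime the proposition reduces to $K$ being trivial (the image of $H^1(\Sigma,\partial\Sigma;\Z/2)$ in $H^1(\Sigma;\Z/2)$ is zero when $g=0$), and you give no argument for it. A secondary soft spot is the ``propagation'' step: you need not just consistency along edges of the transverse-pair graph but that $\gamma\mapsto a_\gamma$ on all nonseparating simple curves descends to $H_1(\Sigma;\Z/2)$, i.e.\ that homologous nonseparating curves have equal $a$-value; connectedness of the nonseparating curve graph alone does not obviously give this, and the paper's rectangle-smoothing argument is precisely the device that supplies this homological control uniformly (and also in genus $0$). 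Finally, the closing claim that $A_\Sigma$ is generated over $\Z[\tr_{c_i}]_{i=1}^n$ by traces of nonseparating simple closed curves is plausible for $g\ge1$ but is asserted without proof; it is again false for $g=0$.
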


\begin{proof}
    The proof in March\'e-Simon works for any domain. It shows that $b_\gamma = 0$ and $a_\gamma = \pm 1$ for any $\gamma \in \pi$ simple and the assignment $\gamma \mapsto a_\gamma$ for simple curves is induced by a central representation.

    Consider $\gamma \delta \in \pi$ which is realized by a curve with a self-intersection. The multicurve expansion
    \[
    \tr_{\gamma\delta} = \tr_{\gamma}\tr_\delta - \tr_{\gamma \delta^{-1}}
    \]
    then $\gamma \cup \delta$ and $\gamma \delta^{-1}$ differ by an embedded rectangle near the self-intersection. Thus the two curves lie in the same homology class in $H_1(\Sigma, \Z / 2\Z)$. Inductively, all of the multicurves in the multicurve expansion of $\tr_\alpha$ where all $\alpha \in \pi$ represents the same homology class in $H_1(\Sigma, \Z / 2\Z)$.

    Thus every $\alpha \in \pi$ and $\varphi \in K$, $\varphi(\tr_\alpha) = a_\alpha \tr_\alpha$ where $a_\alpha = \pm 1$ and $a_\alpha a_\beta = a_{\alpha \beta}$ for any $\beta \in \pi$.
\end{proof}

\begin{corollary}\label{main}
    Let $\Sigma = \Sigma_{g, n}$ where $3g + n \geq 5$ and $(g, n) \neq (1, 2), (2, 0)$. There exists a split exact sequence
    \[
    1 \rightarrow H^1(\Sigma, \partial \Sigma ; \Z / 2\Z) \rightarrow \Aut^*(X(\Sigma)) \rightarrow \Mod(\Sigma) \rightarrow 1.
    \] 
    If $(g, n) = (2, 0)$ then one should replace $\Mod(\Sigma)$ by its center quotient to have the result.
\end{corollary}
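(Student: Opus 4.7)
The plan is that the preceding corollary already supplies a surjection $\Aut^*(X(\Sigma))\twoheadrightarrow \Mod(\Sigma)$ (or onto $\Mod(\Sigma)/Z(\Mod(\Sigma))$ in the $(2,0)$ case), and the natural homomorphism from (\ref{mod to aut}) furnishes a section, so the sequence will split automatically. What remains is to identify the kernel $K$ of this surjection with the relative cohomology group $H^1(\Sigma,\partial\Sigma;\Z/2\Z)$.

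First I would extract from the preceding proposition an injection $K\hookrightarrow H^1(\Sigma,\partial\Sigma;\Z/2\Z)$. Every $\varphi\in K$ satisfies $\varphi(\tr_\alpha)=a_\alpha \tr_\alpha$ for a multiplicative assignment $\alpha\mapsto a_\alpha\in\{\pm 1\}$, that is, a homomorphism $\epsilon_\varphi:\pi\to \Z/2\Z$, which is an element of $H^1(\Sigma,\Z/2\Z)=\Hom(\pi,\Z/2\Z)$. The defining relative condition $\varphi(\tr_{c_i})=\tr_{c_i}$ of $\Aut^*$ forces $\epsilon_\varphi(c_i)=1$ for each peripheral class, so $\epsilon_\varphi$ lies in the subgroup $H^1(\Sigma,\partial\Sigma;\Z/2\Z)\leq H^1(\Sigma,\Z/2\Z)$. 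The assignment $\varphi\mapsto \epsilon_\varphi$ is visibly a group homomorphism, and injective because the $\tr_\gamma$ generate $A_\Sigma$.

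Conversely, I would construct an inverse. Given $\epsilon\in H^1(\Sigma,\partial\Sigma;\Z/2\Z)$, viewed as a homomorphism $\pi\to Z=\{\pm I\}\subseteq \SL_2$, pointwise multiplication of representations by $\epsilon$ gives a $Z$-torsor action on $\Rep(\pi,\SL_2)$ that descends to $X(\Sigma)$ and acts on generators by $\tr_\gamma\mapsto \epsilon(\gamma)\tr_\gamma$. A direct check shows this respects the skein relation $\tr_\alpha\tr_\beta=\tr_{\alpha\beta}+\tr_{\alpha\beta^{-1}}$, since both $\tr_{\alpha\beta}$ and $\tr_{\alpha\beta^{-1}}$ pick up the common factor $\epsilon(\alpha)\epsilon(\beta)$. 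Thus we obtain a well-defined ring automorphism of $A_\Sigma$ fixing each $\tr_{c_i}$, so it lies in $\Aut^*(X(\Sigma))$. Moreover, signs are invisible to valuations (from axioms (2)--(3) of Definition \ref{valdef} one has $2v(-1)=v(1)=0$, hence $v(-f)=v(f)$), so this automorphism fixes every $v_\delta$ in $\mathcal{ML}(\Sigma)\subseteq\Vcal$ and therefore acts trivially on $\Ccal(\Sigma)$, placing it in $K$.

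The main obstacle I anticipate is the $(g,n)=(2,0)$ bookkeeping. There the hyperelliptic involution lies in the center of $\Mod(\Sigma)$ and, because inversion preserves $\SL_2$-traces, already acts trivially on $A_\Sigma$; it therefore belongs to the kernel of the raw map to $\Mod(\Sigma)$ alongside the central representations. This is precisely why the statement quotients by $Z(\Mod(\Sigma))$ on the right in that case, and once that adjustment is made the kernel calculation above goes through verbatim. Combining the two mutually inverse maps above with the splitting furnished by (\ref{mod to aut}) yields the split short exact sequence.
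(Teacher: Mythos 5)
Your proof is correct and matches the paper's approach: you use the surjection from the preceding corollary (induced by the action on $\Ccal(\Sigma)$), invoke the preceding proposition to identify the kernel $K$ with homomorphisms $\pi\to\Z/2\Z$ trivial on peripheral classes, construct the inverse via the central representation action, and obtain the splitting from (\ref{mod to aut}), with the same hyperelliptic-involution adjustment in the $(2,0)$ case. You flesh out a couple of details the paper leaves implicit (verifying the skein relation is respected, and the observation that central representations fix all valuations and hence act trivially on $\Ccal(\Sigma)$), but the argument is the same one.
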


\begin{remark}
    Let $R$ be a domain. One may change the base of $X(\Sigma)$ to $R$ to have the same result. If $\char R = 2$ then we have no central representations and $\Aut^*(X(\Sigma)_R) = \Mod(\Sigma)$.
\end{remark}

\subsection{Surfaces with low complexity}

In this subsection we will cover the exceptional surfaces with $(g, n) = (0, 3), (0, 4)$ and $(1, 1)$.

\subsubsection{Thrice-punctured sphere}

Let $\Sigma = \Sigma_{0,3}$. Then the short exact sequence in Corollary \ref{main} is trivial.

\subsubsection{Four-punctured sphere}

Let $\Sigma = \Sigma_{0,4}$. $X(\Sigma)(\C)$ is fibered by the \textit{relative character varieties} as complex varieties, each of which is a fiber of the morphism
\[
X(\Sigma)(\C) \rightarrow \C^4.
\]
Explicitly, it is given by the complex affine surface
\[
x^2 + y^2 + z^2 - xyz - Ax - By - Cz - D = 0
\]
where $A, B, C, D$ are determined by four boundary monodromy traces. For each fiber, the relative character variety is denoted by $X^{\textrm{rel}}(\Sigma)(\C)$. By the definition of $\Aut^*(X(\Sigma))$, for any fiber $X^{\textrm{rel}}(\Sigma)(\C)$ there exists an embedding
\[
\Aut^*(X(\Sigma))  \hookrightarrow \Aut(X^\textrm{rel}(\Sigma)(\C))
\]

\begin{remark}
To keep consistency with the result of Cantat-Loray, we chose the relative automorphism to fix each boundary trace. Alternatively, one can consider the automorphism of $A_\Sigma$ \textit{preserving} the subalgebra generated by boundary traces.
\end{remark}

\begin{theorem}[\`El huti, Cantat-Loray \cite{el1974cubic}\cite{cantat2007holomorphic}]
    For a generic choice of $X^{\mathrm{rel}}(\Sigma)(\C)$, its automorphism group is isomorphic to $\Mod(\Sigma)$. 
\end{theorem}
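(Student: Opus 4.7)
The plan is to view $X^{\textrm{rel}}(\Sigma)(\C)$ as the affine complement of a triangle of lines in a smooth projective cubic surface $\bar X \subseteq \P^3$, and to bound $\Aut(X^{\textrm{rel}})$ by controlling the action of its elements on $\Pic(\bar X)$. First I would record the forward inclusion $\Mod(\Sigma) \hookrightarrow \Aut(X^{\textrm{rel}})$ explicitly: the three essential simple closed curves on $\Sigma_{0,4}$ are dual to the traces $x,y,z$, and suitable mapping classes realize the three Vieta involutions
\[
s_x : (x,y,z) \mapsto (yz + A - x,\, y,\, z),
\]
and its cyclic siblings $s_y, s_z$, each preserving the cubic because it swaps the two roots of the defining equation viewed as a quadratic in one variable. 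Together with coordinate permutations coming from puncture permutations (when the boundary data admits them), these generate the image of $\Mod(\Sigma_{0,4})$; the abstract group they produce has a standard Coxeter-type presentation and is of infinite order.

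For the reverse inclusion, compactify to $\bar X \subseteq \P^3$ by homogenizing; the boundary divisor $D = \bar X \setminus X^{\textrm{rel}}$ is cut out by $xyz = 0$ at infinity and consists of three projective lines meeting pairwise at three points which (as a direct Jacobian computation shows) are smooth points of $\bar X$ where the tangent plane is $\{w = 0\}$. For generic $(A,B,C,D)$ the surface $\bar X$ is smooth everywhere, so $\bar X$ is a smooth cubic surface and $\Pic(\bar X) \cong \Z^{1,6}$ carries the Weyl group $W(E_6)$ as the stabilizer of $K_{\bar X}$. Any $\varphi \in \Aut(X^{\textrm{rel}})$ extends to a birational self-map of $\bar X$ whose indeterminacy locus lies on $D$, and inducing (after resolving and pushing back down) an isometry of $\Pic(\bar X)$ fixing $K_{\bar X}$. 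The three Vieta involutions contribute three specific reflections in $W(E_6)$, and the main step is to show that any element of $\Aut(X^{\textrm{rel}})$ is obtained from these reflections together with the finite-order boundary permutations already realized by $\Mod$.

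The hardest part, I expect, is organizing the kernel and image of the Picard representation $\Aut(X^{\textrm{rel}}) \to W(E_6)$. Because $W(E_6)$ is finite while $\Aut(X^{\textrm{rel}})$ is infinite, the kernel is large, and one must identify it concretely: I would argue that $\varphi$ trivial on $\Pic(\bar X)$ must fix each $L_i$ setwise, and then combine this with the explicit behavior of $\varphi$ on the boundary to conclude that the kernel reduces to a controlled quotient of the Vieta-generated subgroup. The genericity hypothesis on $(A,B,C,D)$ is what rules out additional biregular automorphisms of $\bar X$ itself that would enlarge the triangle's stabilizer; the bad locus is a proper closed subvariety of $\C^4$ detected by special coincidences among the boundary monodromies. Èl huti handles this via the classical combinatorics of the twenty-seven lines on $\bar X$, while Cantat-Loray replace line configurations with dynamical invariants, most importantly the first dynamical degree of $\varphi$ acting on $H^{1,1}(\bar X;\R)$, which is a discrete quantity that rigidifies the representation and obstructs continuous families of exotic symmetries.
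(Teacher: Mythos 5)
This theorem is not proved in the paper; it is quoted from \`El huti and Cantat--Loray and fed into the discussion of the four-punctured sphere. There is therefore no internal argument to compare against, and your sketch has to be judged against those references.

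Your opening moves are reasonable: compactify to a cubic surface $\bar X\subset\P^3$, recognise the boundary as the triangle $\{w=0\}\cap\bar X$ whose vertices are smooth with tangent plane $\{w=0\}$, and produce the Vieta involutions from mapping classes. But the bounding mechanism is wrong at its core. The Vieta involutions $s_x,s_y,s_z$ \emph{cannot} act as reflections in $W(E_6)$, because they are not biregular on $\bar X$. For any smooth cubic surface $\Aut(\bar X)$ is a finite group, embedding in $W(E_6)$ via its faithful action on $\Pic(\bar X)\cong\Z^{1,6}$ fixing $K_{\bar X}$ and permuting the $27$ lines; meanwhile, for generic parameters $\langle s_x,s_y,s_z\rangle\cong(\Z/2)*(\Z/2)*(\Z/2)$ is infinite. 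So each $s_\bullet$ is only a degree-two birational self-map of $\bar X$, with indeterminacy at vertices of the triangle, and its total-transform pullback on $\Pic(\bar X)$ is neither an isometry nor of finite order; the assignment $\varphi\mapsto\varphi^*$ is not even multiplicative without an algebraic-stability hypothesis. Accordingly, ``Because $W(E_6)$ is finite while $\Aut(X^{\mathrm{rel}})$ is infinite, the kernel is large'' inverts the correct logic: one needs a cohomological representation with \emph{infinite} image and controlled kernel, and $W(E_6)$ can never be its target.

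The fix, and the setting in which \`El huti and Cantat--Loray actually work, is to blow $\bar X$ up at the indeterminacy points (the three vertices of the triangle) to a rational surface $Y$ on which the Vieta involutions become biregular and preserve an anticanonical cycle of rational curves. Then $\Pic(Y)$ is a hyperbolic lattice of rank $10$ with $K_Y^2=0$, and the stabiliser of $K_Y$ in $O(\Pic(Y))$ is an \emph{infinite} Coxeter group; the dynamical-degree and spectral-radius arguments you allude to at the end of your sketch live on $Y$, not on $\bar X$. Your plan would need to be rebuilt on $Y$ with the correct lattice before the reverse inclusion $\Aut(X^{\mathrm{rel}})\subseteq\Mod(\Sigma)$ could be established.
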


In this case every central representation is trivial; thus again Corollary \ref{main} holds.

\subsubsection{Once-punctured torus}

\begin{theorem}[Cantat-Loray, Perepechko \cite{cantat2007holomorphic}\cite{perepechko2021automorphisms}]
    Let $\Sigma = \Sigma_{1,1}$. $\Aut^*(X(\Sigma))$ is generated by a single Vieta involution
    \[
    \tau_z : (x, y, z) \leftrightarrow (x, y, xy-z),
    \]
    $S_3$ as permutations of $x, y, z$ and the double sign changes
    \[
    (x, y, z) \leftrightarrow (\varepsilon_1 x,  \varepsilon_2 y, \varepsilon_3 z)
    \]
    where $\varepsilon_i = \pm 1$ and $\varepsilon_1 \varepsilon_2 \varepsilon_3 = 1$.
\end{theorem}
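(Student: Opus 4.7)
The plan is to reduce the computation of $\Aut^*(X(\Sigma))$ for $\Sigma = \Sigma_{1,1}$ to the known classification of polynomial and holomorphic automorphisms of the Markov-type cubic surface. By the example in Section~\ref{section2}, we have $A_\Sigma \cong \Z[x,y,z]$ with $(x,y,z) = (\tr_a, \tr_b, \tr_{ab})$, and the unique boundary trace is $\tr_{c_1} = x^2 + y^2 + z^2 - xyz - 2$. A relative automorphism is, by definition, a ring automorphism of $\Z[x,y,z]$ fixing $\tr_{c_1}$; equivalently (after base change to $\C$, which is harmless by the remark following the definition of $\Aut^*$), a polynomial automorphism of $\mathbb{A}^3_\C$ preserving the cubic $p = x^2 + y^2 + z^2 - xyz$ pointwise as a regular function.

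Next, I would verify that the three listed families lie in $\Aut^*(X(\Sigma))$. Writing $p$ as a monic quadratic in $z$ shows that the companion root is $xy - z$, so the Vieta involution $\tau_z$ preserves $p$; the $S_3$-action by coordinate permutations preserves $p$ by symmetry, and since $\tau_x$ and $\tau_y$ are $S_3$-conjugates of $\tau_z$, a single Vieta involution suffices. For the sign-change $(x,y,z) \mapsto (\varepsilon_1 x, \varepsilon_2 y, \varepsilon_3 z)$ with $\varepsilon_i = \pm 1$, a direct substitution gives $p \mapsto x^2 + y^2 + z^2 - \varepsilon_1\varepsilon_2\varepsilon_3 \, xyz$, which equals $p$ precisely when $\varepsilon_1\varepsilon_2\varepsilon_3 = 1$, accounting for the parity constraint in the statement.

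The final and main step is to show that these exhaust $\Aut^*(X(\Sigma))$. Here I would invoke the Cantat--Loray analysis of the cubic family $x^2 + y^2 + z^2 - xyz - Ax - By - Cz - D = 0$ together with its refinement by Perepechko to the polynomial setting: any polynomial automorphism of $\mathbb{A}^3$ preserving the level-set fibration of $p$ factors through the group generated by the three Vieta involutions, coordinate permutations, and sign changes. Restricting to those that fix each individual level set (rather than merely permute them) forces the sign-change parity condition, and collapsing the three Vieta involutions to one via $S_3$-conjugation yields the claimed presentation.

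The main obstacle is genuinely the last step, i.e., the classification of polynomial automorphisms preserving $p$; the dynamical/valuative machinery developed earlier in the paper does not directly apply at this complexity since $3g+n = 4 < 5$ and the curve complex of $\Sigma_{1,1}$ is $0$-dimensional with no simplicial structure to exploit. The supporting work of Cantat--Loray and Perepechko sidesteps this via a Bass--Serre-type decomposition of $\Aut(p^{-1}(D))$ using the three Vieta involutions as generators of a free product action on a tree; translating their output back to a ring-theoretic statement over $\Z$ is routine because all generators are defined over $\Z$ and the classification is stable under base change to any domain in which $p$ remains irreducible.
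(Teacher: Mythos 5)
The paper gives no proof of this theorem; it is stated purely as a citation to Cantat--Loray and Perepechko, and your proposal ultimately does the same thing for the hard direction (the exhaustion of $\Aut^*(X(\Sigma))$ by the listed generators). Your verification of the easy containments is correct: $\tau_z$ preserves $p = x^2+y^2+z^2-xyz$ by Vieta's formula, $S_3$ preserves $p$ by symmetry, and the sign change $(x,y,z)\mapsto(\varepsilon_1 x, \varepsilon_2 y, \varepsilon_3 z)$ sends $p$ to $x^2+y^2+z^2 - \varepsilon_1\varepsilon_2\varepsilon_3\, xyz$, which forces $\varepsilon_1\varepsilon_2\varepsilon_3 = 1$; and the conjugation $\sigma\tau_z\sigma^{-1} = \tau_x$ for $\sigma = (x\,z)$ shows one Vieta involution plus $S_3$ suffices.

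One point worth being careful about: the paper's $\Aut^*(X(\Sigma))$ is the group of automorphisms of the polynomial ring $\Z[x,y,z]$ fixing the element $p-2$ (equivalently fixing $p$), not the automorphism group of any single cubic surface $\{p = D\}$. Your appeal to Cantat--Loray and Perepechko implicitly passes through ``an ambient automorphism fixing $p$ restricts to an automorphism of every fiber, hence is constrained by the generic fiber's automorphism group'' --- that is the right strategy, but to close the argument you need a little more than the classification on a single surface: you need that a ring automorphism of $\Z[x,y,z]$ fixing $p$ and acting as one of the listed transformations on (a dense family of) fibers is itself one of the listed transformations, i.e., that there is no residual kernel. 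Perepechko's result on surfaces of Markov type is precisely the reference that handles the polynomial (as opposed to merely birational/holomorphic) setting for this family, so citing it here is the right move; just be explicit that you are using the polynomial version rather than the holomorphic surface-by-surface version of Cantat--Loray. With that caveat, your proposal matches the paper's (implicit) approach of deferring to the cited classifications.
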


Here the double sign changes correspond to the central representations and Corollary \ref{main} holds if one replaces $\Mod(\Sigma)$ with $\Mod(\Sigma) / \langle \iota \rangle = \PGL(2, \Z)$ where $\iota$ denotes the hyperelliptic involution $-\id$.

\begin{corollary}
    Let $\Sigma = \Sigma_{g,n}$ with $\chi(\Sigma) < 0$. Write $\Mod^*(\Sigma)$ as its image into $\Aut^*(X(\Sigma))$, which is isomorphic to the hyperelliptic involution quotient if $(g, n) = (1, 1)$ or $(2, 0)$. If $(g, n) \neq (1, 2)$, there exists a split exact sequence
    \[
    1 \rightarrow H^1(\Sigma, \partial \Sigma ; \Z / 2\Z) \rightarrow \Aut^*(X(\Sigma)) \rightarrow \Mod^*(\Sigma) \rightarrow 1.
    \]
\end{corollary}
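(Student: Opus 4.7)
The plan is to prove the corollary by case-by-case assembly of the results from the preceding subsections, indexed by the profile of $(g, n)$.

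First I would invoke Corollary \ref{main} directly for the generic range $3g + n \geq 5$ with $(g, n) \neq (1, 2), (2, 0)$, since in this range $\Mod^*(\Sigma) = \Mod(\Sigma)$ (no mapping class acts trivially on $X(\Sigma)$). For $(g, n) = (2, 0)$, Corollary \ref{main} already produces the split extension with $\Mod(\Sigma)/Z(\Mod(\Sigma))$ in place of $\Mod(\Sigma)$, and since the center of $\Mod(\Sigma_{2, 0})$ is generated by the hyperelliptic involution this quotient is exactly $\Mod^*(\Sigma)$ as defined.

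Next I would handle the three low-complexity exceptional cases using their ad hoc descriptions. For $(0, 3)$, $A_\Sigma = \Z[\tr_{c_1}, \tr_{c_2}, \tr_{c_3}]$ is a polynomial ring on the three boundary traces, so $\Aut^*(X(\Sigma))$ is trivial, and $H^1(\Sigma, \partial \Sigma; \Z/2\Z)$ is trivial by a direct count with the presentation of $\pi$. For $(0, 4)$, the \`El huti/Cantat-Loray theorem identifies $\Aut^*(X(\Sigma))$ with $\Mod(\Sigma)$ via the embedding into a generic relative fiber, and $H^1(\Sigma, \partial \Sigma; \Z/2\Z) = 0$ by a presentation computation. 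For $(1, 1)$, the Cantat-Loray/Perepechko description exhibits $\Aut^*(X(\Sigma))$ as generated by $\tau_z$, the coordinate permutations $S_3$, and the double sign changes $\varepsilon_1 \varepsilon_2 \varepsilon_3 = 1$; the last subgroup is $(\Z/2\Z)^2 \cong H^1(\Sigma, \partial \Sigma; \Z/2\Z)$ (the boundary is a commutator, so every $\Z/2\Z$-valued cocycle on $\pi$ automatically kills it), and the quotient is $\PGL(2, \Z) = \Mod^*(\Sigma)$.

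The main obstacle in each case is the uniform identification of the sign-change kernel with $H^1(\Sigma, \partial \Sigma; \Z/2\Z)$. This follows from the central-representation analysis of the previous subsection: central representations fixing boundary monodromies are parameterized by $\Hom(H_1(\Sigma, \partial\Sigma; \Z), \Z/2\Z) = H^1(\Sigma, \partial \Sigma; \Z/2\Z)$. The splitting is then supplied by the Dehn-Nielsen-Baer map in (\ref{mod to aut}), passing to the quotient by the hyperelliptic involution in the cases $(1, 1)$ and $(2, 0)$.
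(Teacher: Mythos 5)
Your proposal is correct and follows essentially the same route as the paper: Corollary \ref{main} handles the range $3g+n \geq 5$ with the $(2,0)$ replacement made explicit, and the low-complexity cases $(0,3)$, $(0,4)$, $(1,1)$ are dispatched one at a time using the polynomial-ring description, the \`El huti/Cantat--Loray theorem, and the Cantat--Loray/Perepechko theorem respectively, with the uniform identification of the kernel with the group of boundary-trace-fixing central representations supplied by the preceding subsection and the splitting by the Dehn--Nielsen--Baer map. (You and the paper share the same slight abuse of notation: what is actually meant by $H^1(\Sigma, \partial\Sigma; \Z/2\Z)$ throughout is the image of the natural map into $H^1(\Sigma;\Z/2\Z)$, i.e.\ the kernel of restriction to $H^1(\partial\Sigma;\Z/2\Z)$; for $n \geq 2$ this is a proper quotient of the literal relative cohomology group, but your case analysis is consistent with the intended meaning.)
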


Luo also showed that if $(g, n) \neq (1, 2)$, any automorphism of the curve complex should preserve the simplices corresponding to separating simple closed curves. That any element of $\Aut^*(X(\Sigma))$ fixes any trace along separating simple closed curve, seems not to be \textit{a priori} clear without the aid of the curve complex. 

\section{Moduli of points on spheres}\label{section4}

Let $S(4)$ be the complex hypersurface in $\A^4(\C)$ corresponding to a quaternary quadratic form, corresponding to a standard bilinear form $\langle -, - \rangle$. $S(4)$ naturally embeds into the \textit{Clifford algebra} corresponding to the quadratic space $(\A^4, \langle-, - \rangle)$. We call the closed algebraic subgroup generated by $S(4)$ the \textit{Pin group} denoted by $\mathrm{Pin}(4)$.

\begin{definition}
    The \textit{moduli of $r$ points on a $3$-sphere} is
    \[
    S(4, r) = S(4)^r \sslash \SO(4).
    \]
    The \textit{Coxeter invariant} is the morphism
    \[
    c : S(4, r) \rightarrow \mathrm{Pin}(4) \sslash \SO(4)
    \]
    given by $[(u_1, \ldots, u_r)] \mapsto [u_1 \otimes \cdots \otimes u_r]$.
\end{definition}

The \textit{braid group} over $r$ strands, denoted by $B_r$, is generated by $\sigma_1, \ldots, \sigma_{r-1}$ with relations
\begin{enumerate}
    \item $\sigma_i \sigma_j = \sigma_j \sigma_i$ if $|i - j| \geq 2$.
    \item $\sigma_i \sigma_j \sigma_i = \sigma_j \sigma_i \sigma_j$ if $|i - j| = 1$.
\end{enumerate}
$B_r$ acts on $S(4, r)$ via
\[
\sigma_i [(u_1, \ldots, u_r)] = [(u_1, \ldots, u_{i-1}, s_{u_i}(u_{i+1}), u_i, \ldots, u_r)]
\]
where $s_u(v) = 2\langle u, v \rangle u - v$ is a reflection along $u$.

The exceptional \textit{group} structure of $S^3$ makes it possible to construct an exceptional isomorphism.

\begin{theorem}[Fan-Whang \cite{fan2020surfaces}]
    Let $\Sigma = \Sigma_{g, n}$ with $n = 1, 2$ and $r = 2g + n$. There exists a $B_r$-equivariant isomorphism of complex algebraic varieties
    \[
    S(4, r) \simeq X(\Sigma)(\C) := \SL(2, \C)^{r-1} \sslash \SL(2, \C)
    \]
    where the $B_r$-action on the right hand side is given by the embedding $B_r \rightarrow \Mod(\Sigma)$ given by specifying hyperelliptic generators. Furthermore, the Coxeter invariant corresponds to the boundary monodromies along punctures of $\Sigma$.
\end{theorem}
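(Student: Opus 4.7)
The plan is to use the classical exceptional isomorphism $\mathrm{Spin}(4, \C) \cong \SL(2, \C) \times \SL(2, \C)$ to realize the complex $3$-sphere $S(4)$ concretely as $\SL(2, \C)$, and then to match the Hurwitz braid action on $S(4, r)$ with the Artin action of hyperelliptic half-twists on the character variety. First, identify $\C^4$ with $\Mat_{2 \times 2}(\C)$ via a linear map sending the quadratic form $\langle u, u \rangle$ to $\det$, for instance
\[
(u_1, u_2, u_3, u_4) \longmapsto M(u) := \begin{pmatrix} u_1 + iu_2 & u_3 + iu_4 \\ -u_3 + iu_4 & u_1 - iu_2 \end{pmatrix}.
\]
Under this identification, $S(4)$ becomes $\SL(2, \C)$, and the double cover $\SL(2, \C) \times \SL(2, \C) \twoheadrightarrow \SO(4, \C)$ is realized concretely by $(g_1, g_2) \cdot M = g_1 M g_2^{-1}$ with kernel $\{ \pm(I, I)\}$. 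Viewing $S(4) \subset \mathrm{Pin}(4)$ inside the Clifford algebra gives block-matrix formulas for Clifford products and identifies reflections as $M(v) \mapsto -M(u) M(v)^{-1} M(u)$ up to signs.

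Next, compute the GIT quotient. The induced action on $\SL(2, \C)^r$ reads $(g_1, g_2) \cdot (M_1, \ldots, M_r) = (g_1 M_i g_2^{-1})_i$, so setting $g_2 = g_1 M_1$ slices out $M_1 = I$. The residual action on $(M_i)_{i \geq 2}$ is then simultaneous conjugation $M_i \mapsto g_1 (M_i M_1^{-1}) g_1^{-1}$ by $g_1 \in \SL(2, \C)$, giving
\[
S(4, r) \cong \SL(2, \C)^{r-1} \sslash \SL(2, \C).
\]
Since $\pi_1(\Sigma_{g, n})$ is free of rank $2g + n - 1 = r - 1$ for $n \in \{1, 2\}$, this is precisely $X(\Sigma)(\C)$ as required.

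For the braid equivariance, expand the reflection $s_u(v) = 2 \langle u, v \rangle u - v$ through $M(\cdot)$ to find that $\sigma_i$ realizes the Hurwitz move $(M_i, M_{i+1}) \mapsto (M_i M_{i+1}^{-1} M_i, M_i)$ on the matrix tuple (up to signs), which is exactly the formula for the Artin representation of a braid half-twist acting on free-group generators. Choose a presentation of $\pi_1(\Sigma)$ by hyperelliptic generators $\xi_1, \ldots, \xi_r$, namely lifts of the simple loops coming from a hyperelliptic double cover; then the corresponding hyperelliptic mapping classes act on $X(\Sigma)(\C)$ by precisely this Hurwitz formula, yielding the equivariance. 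Finally, the Coxeter invariant $[u_1 \cdots u_r] \in \mathrm{Pin}(4) \sslash \SO(4)$ translates to an alternating product of the $M_i$'s whose two $\SL(2, \C)$-conjugacy class components match the boundary monodromies via the surface relation $\xi_1 \cdots \xi_r = \prod_j [a_j, b_j] \prod_k c_k$ in $\pi_1(\Sigma)$.

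The main obstacle is this last step: tracking the signs, parities, and base-point normalizations so that the Hurwitz move coincides with the Artin move for \emph{every} $i$ (including $i = 1$, where the slice $M_1 = I$ interacts nontrivially with the braid generator), and translating the alternating Clifford product $u_1 \cdots u_r$ into the matrix product whose components recover the peripheral data --- the single peripheral trace for $n = 1$ (where the product sits in the non-identity component of $\mathrm{Pin}(4)$, which swaps the two $\SL(2, \C)$ factors) and the pair of peripheral traces for $n = 2$ (where both lie in $\mathrm{Spin}(4)$ itself).
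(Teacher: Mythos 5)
The paper does not actually prove this theorem: it cites Fan--Whang and then writes down the explicit isomorphism (the assignment $[\rho]\mapsto[(A_1\cdots A_{r-1},\,A_2\cdots A_{r-1},\ldots,A_{r-1},\,1)]$, together with the formulas for the boundary monodromies and the identification of quadratic forms $\det\leftrightarrow q$). Your sketch reconstructs that route --- identify $S(4)$ with $\SL(2,\C)$ via the determinant form, realize $\SO(4,\C)$ through the two-sided $\SL_2\times\SL_2$ action, slice the GIT quotient, match the reflection with a matrix move, and match the Coxeter invariant with the peripheral element --- so the strategy is the same as the one underlying the paper's display.

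Two points to sharpen, both in the place you yourself flag as the "main obstacle." First, the reflection $s_u(v)=2\langle u,v\rangle u-v$ becomes, in the $\SL(2,\C)$-model where $\langle U,V\rangle=\tfrac12\tr(UV^{-1})$, the move $s_U(V)=\tr(UV^{-1})U-V=UV^{-1}U$ (no sign); so the braid generator realizes the \emph{twisted} Hurwitz move $(M_i,M_{i+1})\mapsto(M_iM_{i+1}^{-1}M_i,\,M_i)$, which is genuinely different from the Artin substitution $(x_ix_{i+1}x_i^{-1},\,x_i)$ on free generators. Your sentence claiming it is "exactly the formula for the Artin representation" is therefore not correct as stated, and if one tried to match it to the Artin substitution directly the equivariance would fail. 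Second, the paper's slice normalizes the \emph{last} coordinate ($u_r=1$), with $u_i=A_i\cdots A_{r-1}$; under that choice the twisted Hurwitz move on the $u$'s descends to $A_{i-1}\mapsto A_{i-1}A_i^{-1}$, $A_i\mapsto A_i$, $A_{i+1}\mapsto A_iA_{i+1}$, which is a Dehn twist along a chain curve $\alpha_i$, i.e.\ the hyperelliptic embedding $B_r\hookrightarrow\Mod(\Sigma)$ by chain twists. Your slice $M_1=I$ is equally valid but forces you to post-compose with a group element to restore the slice whenever $\sigma_1$ is applied, exactly as you noted. So the correct target of the matching is the chain Dehn-twist action, not the Artin free-group action; with that replacement, your outline is consistent with what the paper records and with Fan--Whang.
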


The explicit isomorphism above goes as follows : Fix a system of hyperelliptic generators $\alpha_1, \ldots, \alpha_{r-1}$ of $\pi$ and a local system $[\rho]$, whose monodromies $\rho(\alpha_i)$ is denoted as $A_i \in \SL(2, \C)$. The corresponding points on sphere is given as
\[
[\left( A_1 \cdots A_{r-1}, A_2 \cdots A_{r-1}, \cdots, A_{r-1}, 1 \right)].
\]
The boundary monodromy is given as
\[
[(A_1 A_3 \cdots A_{r-2}) (A_1 A_2 \cdots A_{r-1})^{-1} (A_2 A_4 \cdots A_{r-1})] \quad \text{if } n=1
\]
and
\[
[(A_1 A_3 \cdots A_{r-1}), (A_1 A_2 \cdots A_{r-1})^{-1} (A_2 A_4 \cdots A_{r-2})] \quad \text{if } n=2.
\]
Here the matrices in $\SL(2, \C)$ is identified with points on sphere via the equivalence of quadratic forms $\det = x_1x_4 - x_2x_3$ and the Euclidean quadratic form $q = x_1^2 + x_2^2 + x_3^2 + x_4^2$ over $\C$.

\subsection{Automorphisms of $S(4, r)$}

Let $g > 1, n= 1, 2$ and $r = 2g + n$. Denote the Humphries generators of $\Mod(\Sigma_{g, n})$ by hyperelliptic $\alpha_1, \ldots, \alpha_{r-1}$ and non-hyperelliptic $\gamma$. The Dehn twist along $\gamma$ gives a nontrivial automorphism of $X(\Sigma)$, not lying in the image of the embedding $B_r \hookrightarrow \Mod(\Sigma)$. 

\begin{remark}
    $V(4, r)$, the space of unipotent upper triangular matrices whose symmetrization is of $\rk \leq 4$ is isomorphic to $S(4)^r \sslash \mathrm{O}(4)$. There is a $\deg 2$ morphism $S(4, r) \rightarrow V(4, r)$ and one can define the Coxeter invariant as
    \[
    c : V(4, r) \rightarrow \mathrm{Pin}(4) \sslash \mathrm{O}(4).
    \]
    The Dehn twist along $\gamma$ does not preserve the Coxeter invariant defined above. In particular, the mapping class group action does not descend to the action on $V(4, r)$.
\end{remark}

\subsection{Construction of $\Mod(\Sigma_{g, 1})$ and $\Mod(\Sigma_{g, 2})$}
Let
\[
[\rho] = [(A_1, \ldots, A_{r-1})] = [(A_i)]_{i=1}^{r-1}
\]
be a local system where each $A_i$ is the monodromy along the hyperelliptic generator of $\Sigma_{g, n}$ where $n = 1, 2$ and $r = 2g + n$. 

The $H^1(\Sigma, \Z / 2\Z)$-action is described by
\[
(a_i) \cdot [(A_i)] = [(a_i A_i)]
\]
where $(a_i) = (a_1, \ldots, a_{r-1}) \in (\Z / 2\Z)^{r-1} = H^1(\Sigma, \Z / 2\Z)$.

If $r = 1$, the boundary monodromy is always invariant by the action. 

If $r = 2$, the boundary monodromy is preserved if and only if 
\[
a_1a_3 \cdots a_{r-1} = 1 = a_2a_4 \cdots a_{r-2}.
\]

Note $Z(\SO(4)) = \Z / 2\Z$. $(b_i)_{i=1}^r \in (\Z / 2\Z)^r$ acts on $S(4, r)$ coordinatewise,
\[
(b_i) \cdot [(u_i)] = [(b_i u_i)]
\]
with $Z(\SO(4)) = \{(\pm1)_{i=1}^r \} \leq (\Z / 2\Z)^r$ acting trivially. By unraveling the correspondence, one has an isomorphism
\[
H^1(\Sigma, \Z / 2\Z) \rightarrow Z(\SO(4))^r \slash Z(\SO(4))
\]
defined by
\[
(a_1, \ldots, a_{r-1}) \mapsto [(a_1\cdots a_{r-1}, a_2 \cdots a_{r-1}, \ldots, a_{r-1}, 1)].
\]

Summarizing above computation, we obtain the following results.

\begin{proposition}
    The isomorphism $X(\Sigma_{g, n}) \simeq S(4, r)$ is equivariant under $(\Z / 2\Z)^{r-1}$-action defined above. If $n = 1$, the action preserves the Coxeter invariant. If $n = 2$, the action preserves the Coxeter invariant if and only if $b_1 b_r = 1$ where $(b_i)_{i=1}^r \in Z(\SO(4))^r$. 
\end{proposition}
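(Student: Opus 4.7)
The plan is to reduce everything to direct computations, using that each $a_i$ and $b_i$ is a central scalar $\pm 1$.

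\emph{Equivariance.} I would begin by substituting $A_i \mapsto a_i A_i$ into the explicit Fan--Whang isomorphism
\[
[(A_j)_{j=1}^{r-1}] \longmapsto \bigl[(A_i A_{i+1} \cdots A_{r-1})_{i=1}^{r}\bigr]
\]
(with the $r$-th coordinate understood as $1$). Because each $a_j$ is central, the $i$-th coordinate is multiplied precisely by $a_i a_{i+1} \cdots a_{r-1}$, matching the formula defining the isomorphism $H^1(\Sigma, \Z/2\Z) \simeq Z(\SO(4))^r / Z(\SO(4))$, which proves equivariance.

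\emph{Effect on the Coxeter invariant.} Since the $b_i$'s are central in $\mathrm{Cl}(4)$, they commute through the Clifford product and
\[
c\bigl((b_i) \cdot [(u_i)]\bigr) = \bigl[\,\epsilon \cdot u_1 \cdots u_r\,\bigr]_{\SO(4)}, \qquad \epsilon := \prod_{i=1}^{r} b_i \in \{\pm 1\}.
\]
So the Coxeter invariant is preserved precisely when the scalar $\epsilon$ can be absorbed by an $\SO(4)$-conjugation on the component of $\mathrm{Pin}(4)$ containing $u_1 \cdots u_r$.

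\emph{Case $n = 1$.} Here $r$ is odd, so $u_1 \cdots u_r$ lies in the odd component $\mathrm{Pin}(4) \setminus \mathrm{Spin}(4)$. I would fix any lift $\tilde g \in \mathrm{Spin}(4)$ of $-I \in \SO(4)$; by the vector representation, $\tilde g v \tilde g^{-1} = -v$ for every $v \in \A^4$, so
\[
\tilde g \cdot u_1 \cdots u_r \cdot \tilde g^{-1} = (-u_1) \cdots (-u_r) = (-1)^r \, u_1 \cdots u_r = -u_1 \cdots u_r.
\]
Thus $x$ and $-x$ lie in a single $\SO(4)$-orbit throughout the odd component, and the Coxeter invariant is preserved unconditionally.

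\emph{Case $n = 2$.} Here $r$ is even, so $u_1 \cdots u_r$ lies in $\mathrm{Spin}(4) \cong \SL_2(\C) \times \SL_2(\C)$; the $\SO(4)$-adjoint quotient is recorded by the pair of traces, and the central element $-1 \in \mathrm{Spin}(4)$ corresponds to $(-I, -I)$, which negates both. Generically this is nontrivial, so preservation becomes the single condition $\epsilon = 1$. Unwinding $b_r = 1$ and $b_i = a_i \cdots a_{r-1}$ should then recover the displayed identity in the proposition.

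The main bookkeeping obstacle is this last step: matching the Clifford-theoretic equation $\epsilon = 1$ with the form stated in the proposition. It reduces to a combinatorial identity on the $a_j$'s that must be checked using the freedom of rescaling $(b_i)$ by the diagonal $Z(\SO(4))$, and which should agree with the condition obtained by directly imposing preservation of each boundary monodromy trace under $A_i \mapsto a_i A_i$.
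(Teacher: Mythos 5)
Your route is genuinely different from the paper's. The paper obtains the condition by substituting $A_i \mapsto a_i A_i$ directly into the explicit formulas for the boundary monodromies and reading off the resulting sign; you instead push the central scalar $\epsilon = \prod_{i=1}^r b_i$ through the Clifford product and then analyze when multiplication by $-1$ is trivial on $\mathrm{Pin}(4) \sslash \SO(4)$, component by component. The equivariance step, the identification $c\bigl((b_i)\cdot[(u_i)]\bigr)=[\epsilon\, u_1\cdots u_r]$, and the $n=1$ (odd $r$) argument via conjugation by a spin lift of $-I$ are all correct. For $n=2$ (even $r$) your conclusion that preservation is equivalent to $\epsilon=\prod_i b_i=1$ is also correct, and agrees with what the boundary-monodromy substitution gives: both boundary components get multiplied by $a_1a_3\cdots a_{r-1}$.

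The step you defer as ``bookkeeping'' is not bookkeeping, and it is where the argument currently stops. Under the lift with $b_r=1$ one has $b_i=a_i a_{i+1}\cdots a_{r-1}$, so
\[
\prod_{i=1}^r b_i=\prod_{j=1}^{r-1}a_j^{\,j}=a_1a_3\cdots a_{r-1},
\qquad
b_1 b_r = a_1 a_2\cdots a_{r-1},
\]
and these differ by the even-index product $a_2a_4\cdots a_{r-2}$. Both quantities are well-defined modulo the diagonal $Z(\SO(4))$ (for $\prod_i b_i$ this uses that $r$ is even), so the mismatch is not removable by rescaling. Your own calculation thus yields the condition $\prod_{i=1}^r b_i=1$ (equivalently $a_1a_3\cdots a_{r-1}=1$), which is what the boundary-monodromy computation also gives, and this does not coincide with the stated $b_1 b_r=1$. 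You should carry the computation through explicitly and record this discrepancy rather than asserting that it ``should then recover the displayed identity''; as written, your proof does not establish the proposition in the form stated, and the displayed identity itself appears to need the substitution $b_1 b_r \rightsquigarrow \prod_{i=1}^r b_i$.
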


Let $\Aut(S(4,r))$ be the relative automorphism group of $S(4, r)$, consisting of automorphisms of the complex variety $S(4, r)$ that preserve the Coxeter invariant morphism. Note if $n = 1$
\[
Z(\SO(4))^r / Z(\SO(4)) \simeq Z(SO(4))^{r-1} \hookrightarrow \Aut(S(4,r))
\]
and if $n = 2$
\[
Z(\SO(4))^{r-1} / Z(\SO(4)) = Z(\SO(4))^{r-2} \hookrightarrow \Aut(S(4,r)). 
\]

\begin{corollary}\label{mcg}
    Let $(g, n) \neq (1, 2)$. Then $\Mod(\Sigma_{g, 1})$ is isomorphic to
    \[
    \Aut(S(4,r)) / Z(\SO(4))^{r-1}
    \]
    and $\Mod(\Sigma_{g, 2})$ is isomorphic to
    \[
    \Aut(S(4,r)) / Z(\SO(4))^{r-2}.
    \]
    In particular, they can be defined without an a priori choice of a surface.
\end{corollary}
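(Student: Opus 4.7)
The plan is to bootstrap from the Fan--Whang isomorphism $S(4, r) \simeq X(\Sigma_{g,n})(\C)$ together with Corollary \ref{main}. The Fan--Whang isomorphism intertwines the Coxeter invariant with the boundary-monodromy map, so a morphism of varieties preserving the Coxeter invariant corresponds to a morphism preserving all boundary traces $\tr_{c_i}$. This yields a canonical identification $\Aut(S(4, r)) \simeq \Aut^*(X(\Sigma)(\C))$ of relative automorphism groups.

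Next I would invoke Corollary \ref{main} to obtain the split exact sequence
\[
1 \to H^1(\Sigma, \partial\Sigma; \Z/2\Z) \to \Aut^*(X(\Sigma)(\C)) \to \Mod^*(\Sigma) \to 1,
\]
and compare the kernel with the sign-change group described in the preceding proposition. Under Fan--Whang, the central-representation action $(a_i) \cdot [(A_i)] = [(a_i A_i)]$ on $X(\Sigma)$ translates to the pointwise action of $(\Z/2\Z)^{r-1} \simeq Z(\SO(4))^r / Z(\SO(4))$ on $S(4, r)$. The proposition immediately preceding pinpoints those sign-changes fixing the Coxeter invariant: for $n = 1$ every sign-change does, giving $Z(\SO(4))^{r-1}$; for $n = 2$ the constraint $b_1 b_r = 1$ singles out a codimension-one subgroup $Z(\SO(4))^{r-2}$. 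These coincide with the images of $H^1(\Sigma, \partial\Sigma; \Z/2\Z)$ acting on the relative character variety.

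Substituting back into Corollary \ref{main} and using that $\Mod^*(\Sigma) = \Mod(\Sigma)$ outside the low-complexity exceptions (the hyperelliptic quotient enters only for $(g, n) = (1, 1)$, in which case one should read $\PGL(2, \Z)$ for $\Mod(\Sigma_{1,1})$), the exact sequence becomes
\[
1 \to Z(\SO(4))^{r-1} \to \Aut(S(4, r)) \to \Mod(\Sigma_{g, 1}) \to 1
\]
for $n = 1$ and the analogue with $r-2$ in place of $r-1$ for $n = 2$, yielding the asserted quotient descriptions. The final assertion that these groups can be defined without the \emph{a priori} notion of a surface is then automatic, since $S(4, r)$, the relative automorphism group $\Aut(S(4, r))$, and $Z(\SO(4))$ are all intrinsic algebraic objects built from $\mathrm{Pin}(4)$ and its standard quadratic form.

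The main obstacle is less the structural argument than making the matching of $\Z/2\Z$-actions precise: one must verify that the sign-changes $(b_i) \in (\Z/2\Z)^r$ on $S(4, r)$ genuinely pull back under Fan--Whang to the central-representation action parametrized by $H^1(\Sigma; \Z/2\Z)$, and that the $n = 2$ Coxeter-preserving condition $b_1 b_r = 1$ matches the restriction kernel $H^1(\Sigma, \partial\Sigma; \Z/2\Z) \hookrightarrow H^1(\Sigma; \Z/2\Z)$ on the surface side. This amounts to a direct computation with the explicit hyperelliptic-monodromy formulas recorded above, which is essentially already carried out in the proposition preceding the corollary.
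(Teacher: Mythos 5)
Your proposal matches the paper's implicit argument: Fan--Whang transports $\Aut^*(X(\Sigma)(\C))$ to $\Aut(S(4,r))$, the preceding proposition identifies the central-representation kernel of Corollary~\ref{main} with $Z(\SO(4))^{r-1}$ (resp.\ the $b_1 b_r = 1$ subgroup $Z(\SO(4))^{r-2}$) for $n=1$ (resp.\ $n=2$), and passing to the quotient gives the claim. Your caveat about the $(g,n)=(1,1)$ hyperelliptic quotient is a fair reading of the paper's $\Mod^*$-versus-$\Mod$ bookkeeping, and the reconciliation of the rank counts ($r-1 = 2g$ for $n=1$, $r-2=2g$ for $n=2$, both matching the image of $H^1(\Sigma,\partial\Sigma;\Z/2\Z)$ in $H^1(\Sigma;\Z/2\Z)$) is exactly the verification the paper sketches in the surrounding computation.
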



\begin{thebibliography}{GKW15}

\bibitem[Bon88]{bonahon1988geometry}
Francis Bonahon.
\newblock The geometry of Teichm\"uller space via geodesic currents.
\newblock {\em Inventiones mathematicae}, 92(1):139--162, 1988.

\bibitem[CL07]{cantat2007holomorphic}
Serge Cantat and Frank Loray.
\newblock Holomorphic dynamics, Painlev\'e VI equation and Character Varieties.
\newblock {\em arXiv preprint arXiv:0711.1579}, 2007.

\bibitem[DT16]{deroin2016dominating}
Bertrand Deroin and Nicolas Tholozan.
\newblock Dominating surface group representations by Fuchsian ones.
\newblock {\em International Mathematics Research Notices}, 2016(13):4145--4166, 2016.

\bibitem[{\`E}H74]{el1974cubic}
Marat~H {\`E}l'-Huti.
\newblock Cubic surfaces of Markov type.
\newblock {\em Sbornik: Mathematics}, 22(3):333--348, 1974.

\bibitem[FW20]{fan2020surfaces}
Yu-Wei Fan and Junho~Peter Whang.
\newblock Surfaces, braids, stokes matrices, and points on spheres.
\newblock {\em arXiv preprint arXiv:2009.07104}, 2020.

\bibitem[GKW15]{gueritaud2015compact}
Fran{\c{c}}ois Gu{\'e}ritaud, Fanny Kassel, and Maxime Wolff.
\newblock Compact anti-de Sitter 3-manifolds and folded hyperbolic structures on surfaces.
\newblock {\em Pacific Journal of Mathematics}, 275(2):325--359, 2015.

\bibitem[GS23]{gupta2023dominating}
Subhojoy Gupta and Weixu Su.
\newblock Dominating surface-group representations into $\mathrm{PSL}2(\mathbb{C})$ in the relative representation variety.
\newblock {\em manuscripta mathematica}, 172(3-4):1169--1186, 2023.

\bibitem[Iva97]{ivanov1997automorphisms}
Nikolai~V Ivanov.
\newblock Automorphisms of complexes of curves and of Teichmuller spaces.
\newblock {\em International Mathematics Research Notices}, 1997(14):651--666, 1997.

\bibitem[Iva06]{ivanov2006fifteen}
Nikolai~V Ivanov.
\newblock Fifteen problems about the mapping class groups.
\newblock {\em arXiv preprint math/0608325}, 2006.

\bibitem[Jyo23]{jyothis2023towards}
Meenakshy Jyothis.
\newblock Towards Ivanov's meta-conjecture for geodesic currents.
\newblock {\em arXiv preprint arXiv:2309.14532}, 2023.

\bibitem[Kor99]{korkmaz1999automorphisms}
Mustafa Korkmaz.
\newblock Automorphisms of complexes of curves on punctured spheres and on punctured tori.
\newblock {\em Topology and its Applications}, 95(2):85--111, 1999.

\bibitem[LM08]{lindenstrauss2008ergodic}
Elon Lindenstrauss and Maryam Mirzakhani.
\newblock Ergodic theory of the space of measured laminations.
\newblock {\em International mathematics research notices}, 2008(9):rnm126--rnm126, 2008.

\bibitem[Luo99]{luo1999automorphisms}
Feng Luo.
\newblock Automorphisms of the complex of curves.
\newblock {\em arXiv preprint math/9904020}, 1999.

\bibitem[Mar15]{marche2015character}
Julien March{\'e}.
\newblock Character varieties in $\mathrm{SL}_2$ and Kauffman skein algebras.
\newblock {\em arXiv preprint arXiv:1510.09107}, 2015.

\bibitem[Mas82]{masur1982interval}
Howard Masur.
\newblock Interval exchange transformations and measured foliations.
\newblock {\em Annals of Mathematics}, 115(1):169--200, 1982.

\bibitem[MO93]{morgan1993relative}
John~W Morgan and Jean-Pierre Otal.
\newblock Relative growth rates of closed geodesics on a surface under varying hyperbolic structures.
\newblock {\em Commentarii Mathematici Helvetici}, 68(1):171--208, 1993.

\bibitem[MS84]{morgan1984valuations}
John~W Morgan and Peter~B Shalen.
\newblock Valuations, trees, and degenerations of hyperbolic structures, I.
\newblock {\em Annals of Mathematics}, 120(3):401--476, 1984.

\bibitem[MS21]{marche2021automorphisms}
Julien March{\'e} and Christopher-Lloyd Simon.
\newblock Automorphisms of character varieties.
\newblock {\em Annales Henri Lebesgue}, 4:591--603, 2021.

\bibitem[OP18]{ohshika2018homeomorphismes}
Ken'ichi Ohshika and Athanase Papadopoulos.
\newblock Hom{\'e}omorphismes et nombre d'intersection.
\newblock {\em Comptes Rendus. Math{\'e}matique}, 356(8):899--902, 2018.

\bibitem[Ota14]{otal2014compactification}
Jean-Pierre Otal.
\newblock Compactification of spaces of representations after culler, morgan and shalen.
\newblock In {\em Berkovich spaces and applications}, pages 367--413. Springer, 2014.

\bibitem[Per21]{perepechko2021automorphisms}
A~Yu Perepechko.
\newblock Automorphisms of surfaces of markov type.
\newblock {\em Mathematical Notes}, 110(5):732--737, 2021.

\bibitem[PS97]{przytycki1997skein}
J{\'o}zef~H Przytycki and Adam~S Sikora.
\newblock On skein algebras and $\mathrm{SL}_2 (\mathbb{C})$-character varieties.
\newblock {\em arXiv preprint q-alg/9705011}, 1997.

\bibitem[PS19]{przytycki2019skein}
J{\'o}zef Przytycki and Adam Sikora.
\newblock Skein algebras of surfaces.
\newblock {\em Transactions of the American Mathematical Society}, 371(2):1309--1332, 2019.

\bibitem[Sko96]{skora1996splittings}
Richard~K Skora.
\newblock Splittings of surfaces.
\newblock {\em Journal of the American Mathematical Society}, pages 605--616, 1996.

\bibitem[Thu08]{thurston2008geometric}
Dylan Thurston.
\newblock Geometric intersection of curves on surfaces.
\newblock {\em preprint}, 2008.

\end{thebibliography}
\end{document}